%%%%%%%%%%%%%%%%%%%%%%%%%%%%%%%%%%%%%%%%%%%%%%%%%%%%%%%%%%%%%%
\documentclass[12pt]{amsart}
\usepackage{amssymb,amsmath,bm}
\usepackage{enumitem}
\oddsidemargin=-.0cm \evensidemargin=-.0cm \textwidth=16cm
\textheight=22cm \topmargin=0cm

%\usepackage{showkeys}

%%%%%%%%%%%%%%%%%%%%%%%%%%%%%%%%%%%%%%%%%%%%
\newcommand{\eps}{\varepsilon}
\newcommand{\e}{{\rm e}}
\newcommand{\dist}{{\rm dist}}
\newcommand{\dt}{{\rm d}}
\newcommand{\Id}{{\rm Id}}
\newcommand{\dom}{{\rm dom}}
\newcommand{\dive}{{\rm div \,}}

\newcommand{\R}{\mathbb{R}}

\newcommand{\N}{\mathbb{N}}

\newcommand{\B}{{\mathcal B}}
\newcommand{\D}{{\mathcal D}}

\newcommand{\A}{{\mathcal A}}
\newcommand{\K}{{\mathcal K}}

\newcommand{\Q}{{\mathcal Q}}

\newcommand{ \la}{\langle}
\newcommand{ \ra}{\rangle}

 \newcommand{\TT}{{\mathbb T}}
\newcommand{\C}{{\mathcal C}}
\newcommand{\U}{{\mathcal U}}

\newcommand{\pt}{{\partial_t}}

%%%%%%%%%%%%%%%%%%%%%%%%%%%%%%%%%%%%%%%%%%%%
\newtheorem{proposition}{Proposition}[section]
\newtheorem{theorem}[proposition]{Theorem}
\newtheorem{corollary}[proposition]{Corollary}
\newtheorem{lemma}[proposition]{Lemma}
\theoremstyle{definition}
\newtheorem{definition}[proposition]{Definition}
\newtheorem{remark}[proposition]{Remark}

\numberwithin{equation}{section}
%%%%%%%%%%%%%%%%%%%%%%%%%%%%%%%%%%%%%%%%%%%%

% BIBLIOGRAPHY
\newcommand{\au}[1]{\rm{#1}}
\newcommand{\ti}[1]{\it{#1}}
\newcommand{\jou}[1]{\rm{#1}}
\newcommand{\bk}[1]{\it{#1}}
\newcommand{\no}[3]{\textbf{#1} (#3), #2.}
\newcommand{\eds}[3]{#1, #2, #3.}
%\def \au {\rm}
%\def \ti {\it}
%\def \jou {\rm}
%\def \bk {\it}
%\def \no#1#2#3 {{\bf #1} (#3), #2.}
%%\no{Vol}{Pag}{Year}
%\def \eds#1#2#3 {#1, #2, #3.}
%\eds{Pub}{City}{Year}
%%%%%%%%%%%%%%%%%%%%%%%%%%%%%%%%%%%%%%%%%%%%

% Insert the name of "your journal" with
% \journalname{myjournal}
%
\begin{document}

\title[Multivalued Attractors and their Approximation]
{Multivalued Attractors and their Approximation: Applications to the Navier-Stokes equations}

\author[M. Coti Zelati, F. Tone]
{Michele Coti Zelati, Florentina Tone}

\date{\today}

\address{Indiana University - Department of Mathematics
\newline\indent
Rawles Hall, Bloomington, IN 47405, USA}
\email{micotize@indiana.edu {\rm (M.\ Coti Zelati)}}

\address{University of West Florida - Department of Mathematics and Statistics
\newline\indent
Pensacola, FL 32514, USA}
\email{ftone@uwf.edu {\rm (F.\ Tone)}}

\subjclass[2000]{37L05, 37L15, 65P99, 35Q35}

\keywords{Multivalued dynamical systems, global attractors, time discretization, Navier-Stokes equations, implicit Euler scheme}

\maketitle

\begin{abstract}
This article is devoted to the study of  multivalued semigroups and their asymptotic behavior,
with particular attention to iterations of set-valued mappings. After developing 
a general abstract framework, we present an application to a time discretization 
of the two-dimensional Navier-Stokes equations.
More precisely, we prove that the fully implicit Euler scheme generates a  family 
of discrete multivalued  dynamical systems, whose global attractors converge to 
the global attractor of the continuous system as the time-step parameter approaches zero.
\end{abstract}
\section{Introduction}

\noindent The variety of questions related to evolution equations arising from fluid mechanics problems
constitutes  a challenging and fascinating area of mathematics, which has attracted the attention
of a wide number of researchers for many years. One important aspect, among others, is the
understanding of the behavior of solutions to differential equations as time goes to infinity. For
autonomous systems, this translates into the study of the properties of a semigroup of
operators $\{S(t)\}_{t\geq 0}$, also called a dynamical system, acting on a phase space $X$, typically
a Banach space or, more generally, a complete metric space \cite{BV,CV,H,T2}.
Notice that the parameter $t$ could be regarded as discrete, if dealing with a difference equation,
or as continuous, in the case of a differential equation.

When global existence and uniqueness of solutions can be proved, dynamical systems arise as
solution operators assigning to a certain initial condition $x\in X$ the corresponding
solution $x(t)=S(t)x$ of the evolution problem under consideration. Unfortunately, in many
instances, uniqueness of solutions may be hard to prove, or even out of reach. In this case,
one has to deal with a so-called multivalued semigroup of operators, for which
$S(t)x$ is the set of all possible solutions at time $t$. Multivalued semigroups have
been investigated by many authors, and are particularly powerful in the study of abstract differential inclusions,
doubly nonlinear equations, gradient flows and stochastic partial differential equations  \cite{B,BB,CLM,MV,RSS,SSS}.

From the large time behavior viewpoint, the most  relevant object is the so-called \emph{global attractor},
namely, the unique compact subset of the phase space which is at the same time invariant and attracting. As noted in
\cite{W1}, it is of crucial importance to understand whether the longterm dynamics of  a system
possessing global attractor can be properly approximated by discrete attractors of discrete dynamical systems
generated, for example, by numerical schemes associated to the evolution problem under concern. 
This issue has been widely investigated and the interested reader is referred to, e.g., \cite{HLR,HS2000,ju:02,KV,S1989,st,TXW}.

\subsection{The physical model and its approximation} Let $\Omega\subset \R^2$ be a bounded domain with smooth
boundary $\partial \Omega$. For $t\geq0$ and $\nu>0$, we consider  the two-dimensional Navier-Stokes equations \cite{S,T1}
\begin{equation}\label{eq:NS}
\begin{cases}
\pt u-\nu\Delta u + (u\cdot \nabla)u +\nabla p=f,\\
\dive u =0,
\end{cases}
\end{equation}
where $f$ is an autonomous incompressible forcing term. The system is supplemented with the nonslip  boundary condition
\begin{equation}\label{eq:NSBC}
u(x,t)|_{x\in\partial \Omega}=0,
\end{equation}
and the initial condition
\begin{equation}\label{eq:NSIC}
u(x,0)=u_0(x),
\end{equation}
along with its time discretization provided by the fully implicit Euler scheme \cite{T3}
\begin{equation}\label{eq:phyEU}
\frac{u^n-u^{n-1}}{k}-\nu \Delta u^n + (u^n\cdot \nabla)u^n +\nabla p^n=f, \qquad u^0=u_0.
\end{equation}
Due to its nonlinear nature, uniqueness of solution to such a numerical approximation can be proved only
by restricting the time-step parameter $k>0$ to be small enough.  To be more precise, such a restriction
depends on the initial datum $u_0$ and does not therefore allow to define a single-valued discrete semigroup
of operators in the classical sense. Nonetheless, we will be able to show that, in fact, this difficulty can
be overcome by defining a family of \emph{multivalued} discrete semigroups $\{S_k,\, 0<k\leq \kappa_1\}$, where
$\kappa_1>0$ is constant  and independent of $u_0$.

Taking advantage of previous results contained in \cite{TW}, we will address the issue of the existence of a family of discrete global attractors
$\A_k$ of $S_k$ and we will prove that $\A_k\to \A$ as $k\to 0$ in a suitable sense,
where $\A$ is the global attractor of the single-valued dynamical system generated by
\eqref{eq:NS}--\eqref{eq:NSIC}.

\subsection{Structure of the paper} In the next section we develop,
along the lines of \cite{MV}, the abstract machinery of multivalued dynamical systems
needed to prove the results described above. In particular, we will
focus on the existence of multivalued global attractors, and prove a convergence
theorem for families of discrete attractors depending on a parameter.
Section \ref{sec:appl} is dedicated to the applications of the abstract theory
to the Navier-Stokes equations and their approximation. It is shown that the fully
implicit Euler scheme generates a  multivalued discrete dynamical system
whose asymptotic dynamics is properly related to the one of the Navier-Stokes equations.

\section{The Abstract Framework}\label{sec:abs}
\noindent We here develop in an abstract way the main tools needed for the study of
multivalued semigroups and their asymptotic behavior. Some results are more or less already known
thanks to the works \cite{B} and \cite{MV}. We also refer to \cite{CMR} and to the more recent work \cite{GS}
for a fairly complete overview on the subject. Except for the approximation results,
equivalent statements of the theorems in this section can be found in \cite{MV}. 
We report here alternative proofs for completeness and review purposes.

While the above papers mainly dealt with continuous-time semigroup, we are more interested
in discrete dynamical systems generated by numerical
schemes for which uniqueness of solutions may not hold. In particular, we will address   the question
of under what conditions a discrete multivalued semigroup can approximate the longtime behavior
of a continuous one.

\subsection{Multivalued Semigroups}
Let $(X,\|\cdot\|)$ be a real Banach space, and let $\TT$ be either $\R^+=[0,\infty)$ or $\N$. A one-parameter family of set-valued maps $S(t):2^X\to 2^X$
is a \emph{multivalued semigroup} (m-semigroup) if it satisfies the following properties:

\begin{enumerate}[label=(S.\arabic*)]
	\item $S(0)$ is the identity on $2^X$; \label{S.1}
	\item $S(t+\tau)=S(t)S(\tau)$, for all $t,\tau \in \TT$. \label{S.2}
\end{enumerate}

To simplify the notation, if $x\in X$ we will write $S(t)x$ in place of $S(t)\{x\}$ and, as customary, for any set $\B\in 2^X$, we will assume
$$
S(t)\B=\bigcup_{x\in\B}S(t)x.
$$
The m-semigroup is said to be \emph{closed} if it fulfills the further property:
\begin{enumerate}[label=(S.\arabic*),resume]
	\item  $S(t)$ is a closed map for every $t\in\TT$, meaning that if $x_n\to x$ and
	$y_n\in S(t)x_n$ is such that $y_n\to y$, then $y\in S(t)x$. \label{S.3}
\end{enumerate}

\begin{remark}
The notion of continuity of a set-valued map is not as immediate as in the single-valued
case. An m-semigroup is called \emph{upper semicontinuous} if given $x\in X$ and
a neighborhood $\U(S(t)x)$ of $S(t)x$, there exists $\delta>0$ such that
$$
\|x-y\|<\delta\qquad \Rightarrow\qquad S(t)y\subset \U(S(t)x).
$$
On the other hand, $S(t)$ is defined to be \emph{lower semicontinuous} if given
$x_n\to x$ and $y\in S(t)x$, there exists $y_n\in S(t)x_n$ such that $y_n\to y$.
Finally, $S(t)$ is \emph{continuous} if it is at the same time lower and upper semicontinuous.
In general, it is not true that a continuous m-semigroup is closed (as it is in the single-valued case),
the problem being that $S(t)x$ might not be a closed set (if $S(t)x$ is closed for any $x\in X$, the $S(t)$ is said to have closed values).
Nonetheless, if $S(t)$ is upper semicontinuous and has closed values, then $S(t)$ is closed (see \cite{AF}).
\end{remark}
The \emph{positive orbit} of $\B$, starting at $t\in\TT$, is the set
$$
\gamma_t(\B)=\bigcup_{\tau\geq t}S(\tau)\B,
$$
where we agree to set $\gamma(\B)=\gamma_0(\B)$. A function $y:\TT\to X$ is said to be a trajectory starting at $y_0\in X$
if $y(0)=y_0$ and $y(t+\tau)\in S(t)y(\tau)$ for every $t,\tau\in \TT$.

\subsection*{Limit Sets}
For any $\B\in2^X$, the set
$$
\omega(\B)=\bigcap_{t\in\TT}\overline{\gamma_t(\B)}
$$
is called the \emph{$\omega$-limit set} of $\B$. The following characterization of $\omega(\B)$ holds true in the
multivalued case, and the proof is identical to the single-valued counterpart.
\begin{lemma}\label{lemma:omega}
A point $x\in X$ belongs to  $\omega(\B)$ if and only if there are sequences $t_n\to \infty$ and $x_n\in S(t_n)\B$ 
such that $x_n\to x$ as $n\to\infty$.
\end{lemma}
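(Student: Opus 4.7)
The plan is to unravel the definitions on both sides and exhibit the equivalence directly; this is the standard single-valued argument, which transfers verbatim because the definition of $\gamma_t(\B)$ and of the intersection do not care whether $S(\tau)$ is single- or multi-valued.

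First I would prove the ``only if'' direction. Suppose $x \in \omega(\B) = \bigcap_{t\in\TT}\overline{\gamma_t(\B)}$. Then in particular, for every $n \in \N$ we have $x \in \overline{\gamma_n(\B)}$. Hence there exists $x_n \in \gamma_n(\B)$ with $\|x_n - x\| < 1/n$. By the very definition of $\gamma_n(\B) = \bigcup_{\tau \geq n} S(\tau)\B$, we can pick $t_n \geq n$ such that $x_n \in S(t_n)\B$. Then $t_n \to \infty$ and $x_n \to x$, as required.

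Next I would prove the ``if'' direction. Assume we have $t_n \to \infty$ and $x_n \in S(t_n)\B$ with $x_n \to x$. Fix an arbitrary $t \in \TT$. Since $t_n \to \infty$, there exists $N = N(t)$ such that $t_n \geq t$ for all $n \geq N$. For such $n$ we have $x_n \in S(t_n)\B \subset \gamma_t(\B)$, so $\{x_n\}_{n \geq N}$ is a sequence in $\gamma_t(\B)$ converging to $x$, which places $x \in \overline{\gamma_t(\B)}$. Since $t$ was arbitrary, $x \in \bigcap_{t\in\TT}\overline{\gamma_t(\B)} = \omega(\B)$.

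There is no real obstacle here: the argument is purely set-theoretic plus the trivial monotonicity $\gamma_t(\B) \supset \gamma_s(\B)$ when $s \geq t$. The multivalued nature of $S$ enters only through the unions defining $\gamma_t(\B)$ and $S(t)\B$, both of which behave well under this kind of diagonal selection. No closedness or semicontinuity property of $S$ is needed, which is consistent with the statement of the lemma.
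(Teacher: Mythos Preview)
Your proof is correct. The paper itself does not supply a proof of this lemma, stating only that ``the proof is identical to the single-valued counterpart''; your argument is precisely that standard single-valued proof, so there is nothing to compare.
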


A nonempty set $\B\in 2^X$ is \emph{invariant} for $S(t)$ if
$$
S(t)\B=\B, \qquad \forall t\in\TT.
$$
If $S(t)\B\subset \B$, then $\B$ is said to be positively invariant.

\subsection*{Dissipativity}
A set $\B_0\in 2^X$ is an \emph{absorbing set} for the m-semigroup $S(t)$
if for every bounded set $\B\in 2^X$ there exists $t_\B\in \TT$ such that
$$
S(t)\B\subset \B_0, \qquad \forall t\geq t_\B.
$$
Given two nonempty sets $\B,\C\in 2^X$, the Hausdorff semidistance between $\B$ and $\C$ (in $X$) is defined as
$$
\dist(\B,\C)=\sup_{b\in\B}\inf_{c\in\C}\|b-c\|.
$$
Notice that, in general, $\dist(\B,\C)\neq \dist(\C,\B)$. 
A nonempty set $\C\in 2^X$ is \emph{attracting} if for every bounded set $\B$ we have
$$
\lim_{t\to \infty}\dist(S(t)\B,\C)=0.
$$
The m-semigroup $S(t)$ is called \emph{dissipative} if it possesses a bounded absorbing set.
Some authors prefer to  require compactness (instead of only boundedness) in the notion of
dissipativity, a strategy which has been successful expecially in parabolic problems \cite{R}.

\subsection*{The Global Attractor}
A nonempty compact set $\A\in 2^X$ is said to be the \emph{global attractor} of $S(t)$ if
\begin{enumerate} [label=(A.\arabic*)]
	\item  $\A$ is invariant;  \label{A.1}
	\item  $\A$ is an attracting set. \label{A.2}
\end{enumerate}

\begin{remark}
The global attractor, if it exists, is necessarily unique. Moreover, it enjoys the following maximality
and minimality properties:
\begin{enumerate}
	\item[(i)] let $\tilde\A$ be a bounded set satisfying  \ref{A.1}. Then $\A\supset \tilde\A$;
	\item[(ii)] let $\tilde\A$ be a closed set satisfying \ref{A.2}. Then $\A\subset \tilde\A$.
\end{enumerate}
\end{remark}

In order to state the result on the existence of the global attractor, we need a definition. Given
a bounded set $\B\in 2^X$, the \emph{Kuratowski measure of noncompacteness} $\alpha(\B)$ of
$\B$ is defined as
$$
\alpha(\B)=\inf\big\{\delta\, :\, \B\ \text{has a finite cover by balls of } X \text{ of diameter less than } \delta\big\}.
$$
We list hereafter some properties of $\alpha$.
\begin{enumerate}[label=(K.\arabic*)]
	\item $\alpha(\B)=\alpha(\overline{\B})$; \label{K.1}
	\item $\B_1\subset \B_2$ implies that $\alpha(\B_1)\leq \alpha(\B_2)$; \label{K.2}
	\item $\alpha(\B)=0$ if and only if $\overline{\B}$ is compact; \label{K.3}
	\item if $\{\B_t\}_{t\in\TT}$ is a family of nonempty closed sets such that $\B_{t_2}\subset \B_{t_1}$ for
			$t_2>t_1$ and $\lim_{t\to\infty}\alpha(\B_t)=0$, then $\B=\bigcap_{t\in\TT} \B_t$ is nonempty and compact; \label{K.4}
	\item if $\{\B_t\}_{t\in\TT}$ and $\B$ are as above, given any $t_n\to\infty$ and any $x_n\in \B_{t_n}$, there exist
			$x\in \B$ and a subsequence $x_{n_k}\to x$. \label{K.5}
\end{enumerate}

\begin{theorem}\label{teo:attr}
Suppose that the closed m-semigroup $S(t)$ possesses a bounded absorbing set $\B_0\in 2^X$ and
\begin{equation}\label{eq:asymptcpt}
\lim_{t\to\infty}\alpha(S(t)\B_0)=0.
\end{equation}
Then $\omega(\B_0)$ is the global attractor of $S(t)$.
\end{theorem}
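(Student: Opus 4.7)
The plan is to verify in turn that $\omega(\B_0)$ is nonempty and compact, then attracting (A.2), and finally invariant (A.1), where invariance will be split into the two inclusions $\omega(\B_0)\subset S(t)\omega(\B_0)$ and $S(t)\omega(\B_0)\subset\omega(\B_0)$.

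Nonemptiness and compactness follow from (K.4) applied to the nested family $\F_t = \overline{\gamma_t(\B_0)}$: the required decay $\alpha(\F_t)\to 0$ is obtained by picking $t_0$ with $S(\tau)\B_0\subset\B_0$ for $\tau\geq t_0$ (by absorption) and using \ref{S.2} to get $\gamma_t(\B_0)\subset S(t-t_0)\B_0$ for $t\geq t_0$, then combining (K.1)--(K.2) with \eqref{eq:asymptcpt}. The attracting property follows by contradiction: if some $x_n\in S(t_n)\B$ stayed $\eps$-away from $\omega(\B_0)$ with $t_n\to\infty$, absorption would place $x_n\in S(t_n-t_\B)\B_0\subset \F_{t_n-t_\B}$ for large $n$, and (K.5) would extract a subsequence converging into $\bigcap_t\F_t=\omega(\B_0)$.

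For $\omega(\B_0)\subset S(t)\omega(\B_0)$, given $y\in\omega(\B_0)$, Lemma~\ref{lemma:omega} produces $y_n\in S(t_n)\B_0$ with $y_n\to y$ and $t_n\to\infty$. Splitting via \ref{S.2}, I would write $y_n\in S(t)u_n$ for some $u_n\in S(t_n-t)\B_0$; property (K.5) yields a subsequence $u_{n_k}\to u\in\omega(\B_0)$, and the closedness axiom \ref{S.3} finishes the job: $y\in S(t)u\subset S(t)\omega(\B_0)$.

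The reverse inclusion $S(t)\omega(\B_0)\subset\omega(\B_0)$ is the main obstacle, since closedness of $S(t)$ alone does not let one lift $y_n\to y$ to a sequence $z_n\in S(t)y_n$ converging to a prescribed $z\in S(t)y$ (this would require lower semicontinuity of $S(t)$, which is not assumed). My workaround is to combine the inclusion just proved with the attracting property, exploiting the semigroup structure: given $z\in S(t)y$ with $y\in\omega(\B_0)$, for every $s>0$ pick $u_s\in\omega(\B_0)$ with $y\in S(s)u_s$, so that by \ref{S.2}
\[
z\in S(t)y\subset S(t)S(s)u_s = S(t+s)u_s\subset S(t+s)\omega(\B_0).
\]
Since $\omega(\B_0)$ is bounded, the attracting property forces $\dist(S(t+s)\omega(\B_0),\omega(\B_0))\to 0$ as $s\to\infty$, hence $\dist(z,\omega(\B_0))=0$, and the compactness of $\omega(\B_0)$ yields $z\in\omega(\B_0)$.
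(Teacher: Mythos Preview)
Your proof is correct and follows essentially the same route as the paper's: both establish nonemptiness/compactness via (K.4) applied to $\overline{\gamma_t(\B_0)}$ after the inclusion $\gamma_t(\B_0)\subset S(t-t_0)\B_0$, prove attraction by contradiction using (K.5), obtain $\omega(\B_0)\subset S(t)\omega(\B_0)$ by splitting $S(t_n)=S(t)S(t_n-t)$ and invoking closedness \ref{S.3}, and deduce the reverse inclusion from $S(t)\omega(\B_0)\subset S(t+s)\omega(\B_0)$ combined with the attracting property. The only cosmetic differences are that you treat a general bounded set $\B$ explicitly in the attraction step (the paper argues for $\B_0$ and leaves the absorption reduction implicit), and you appeal to (K.5) directly for the subsequence $u_{n_k}\to u\in\omega(\B_0)$ where the paper re-derives this as a separate claim from attraction plus compactness.
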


\begin{proof}
First, we prove that $\omega(\B_0)$ is nonempty and compact. Let $t_0\in\TT$ be such that
$$
S(\tau)\B_0\subset \B_0, \qquad \forall \tau\geq t_0.
$$
If $t\geq t_0$, we have
\begin{align*}
\gamma_t(\B_0)&=\bigcup_{\tau\geq t}S(\tau) \B_0=\bigcup_{\tau\in\TT} S(\tau+t)\B_0\\
&=\bigcup_{\tau\in \TT} S(t-t_0)S(\tau+t_0)\B_0\subset \bigcup_{\tau\in \TT} S(t-t_0)\B_0=S(t-t_0)\B_0.
\end{align*}
Therefore, in light of \ref{K.1}, \ref{K.2} and \eqref{eq:asymptcpt},
$$
\lim_{t\to\infty} \alpha(\overline{\gamma_t(\B_0)})=\lim_{t\to\infty} \alpha(\gamma_t(\B_0))=0.
$$
Since the sets $\gamma_t(\B_0)$ are nested, from \ref{K.4}  we conclude that
$$
\omega(\B_0)=\bigcap_{t\in\TT}\overline{\gamma_t(\B_0)}
$$
is nonempty and compact.

To prove $\omega(\B_0)$ is attracting, argue by contradiction, and assume that there exist $\eps>0$ and sequences
$t_n\to\infty$ and $y_n\in S(t_n)\B_0$ such that
$$
\inf_{y\in \omega(\B_0)}\|y_n-y\|\geq \eps.
$$
Since $y_n\in S(t_n)\B_0$, it follows that $y_n\in \gamma_{t_n}(\B_0)$. By properties \ref{K.4} and \ref{K.5}, there exist $z\in\omega(\B_0)$
and a subsequence $y_{n_k}\in S(t_{n_k})\B_0$ such that $y_{n_k}\to z$, which is a contradiction.

It remains to show that $\omega(\B_0)$ is invariant. Let $t_n\to\infty$ and $y_n\in S(t_n)\B_0$ be any sequences. We
claim that there exists $y\in\omega(\B_0)$ such that $y_n\to y$ up to a subsequence. Since $\omega(\B_0)$ is attracting,
we know that
$$
\lim_{n\to\infty}\dist (S(t_n)\B_0,\omega(\B_0))=0.
$$
As a consequence,
$$
\lim_{n\to\infty}\inf_{z\in\omega(\B_0)} \|y_n-z\|=0,
$$
from which we deduce the existence of a sequence $z_n\in\omega(\B_0)$ such that
$$
\lim_{n\to\infty}\|y_n-z_n\|=0.
$$
From the compactness of $\omega(\B_0)$ we readily get $y\in \omega(\B_0)$ and a subsequence $n_k$
such that $z_{n_k}\to y$, and, in turn, $y_{n_k}\to y$.

Let $x\in \omega(\B_0)$ and $t\in\TT$. By Lemma \ref{lemma:omega},
there exist sequences $t_n\to\infty$ and $y_n\in S(t_n)\B_0$ such that $y_n\to x$ as $n\to \infty$.
Since
$$
S(t_n)\B_0=S(t)S(t_n-t)\B_0,
$$
we obtain that $y_n\in S(t)z_n$, with $z_n\in S(t_n-t)\B_0$. By the above claim, there
exists $y\in\omega(\B_0)$ such that $z_n\to y$ up to a subsequence. The fact that
$S(t)$ is closed then yields the following implication
$$
z_n\to y, \qquad S(t)z_n\ni y_n\to x \qquad \Rightarrow \qquad x\in S(t)y,
$$
namely the inclusion $\omega(\B_0)\subset S(t)\omega(\B_0)$. Turning to the opposite
one, we now know that
$$
S(t)\omega(\B_0)\subset S(\tau)S(t)\omega(\B_0)=S(t+\tau)\omega(\B_0)
$$
for every $t,\tau\in \TT$. Since $\omega(\B_0)$ is attracting, for every neighborhood
$\U(\omega(\B_0))$ of $\omega(\B_0)$, there exists $t_\U\in\TT$ such that
$$
S(t+\tau)\omega(\B_0)\subset \U(\omega(\B_0)), \qquad \forall\tau\geq t_\U.
$$
Thus $S(t)\omega(\B_0)\subset \U(\omega(\B_0))$, where $\U(\omega(\B_0))$ is arbitrary, so that
$$
S(t)\omega(\B_0)\subset \overline{\omega(\B_0)}=\omega(\B_0), \qquad \forall t\in\TT,
$$
which concludes the proof.
\end{proof}

\subsection{Discrete Approximation Of Multivalued Semigroups}
We now turn our attention to the approximation of continuous-time m-semi\-groups
by discrete ones, exploring how certain properties that hold true in the discrete
regime are carried over to the continuous limit.

\begin{remark}
Given a set-valued map $S:2^X\to 2^X$, we can define a discrete m-semigroup by
$$
S(n)=S^n, \qquad \forall n\in\N.
$$
Properties  \ref{S.1} and \ref{S.2}  are
trivially satisfied, and, in this case, we will write that $\{S\}_{n\in\N}$
(instead of the redundant $\{S^n\}_{n\in\N}$) is a discrete m-semigroup.
\end{remark}

Turning to our approximation problem, let $\{S(t)\}_{t\in\R^+}$ be a closed m-semigroup, $\kappa_0$ a
positive constant, and consider a family of discrete
closed m-semigroups $\{S_k,\, 0<k\leq \kappa_0\}_{n\in\N}$ where, for each fixed $k$, the map
$S_k:2^X\to 2^X$ satisfies the usual semigroup properties
$$
S_k^0=\Id_{2^X}, \qquad S_k^{n+m}=S_k^{n}S_k^m, \qquad \forall n,m\in\N.
$$
Such maps arise in the study of numerical schemes associated to evolutionary
equations, in which either the discretization problem or the differential system (or both) might
not enjoy any uniqueness property of solutions. From the point of view of the longtime
behavior of solutions, it is therefore interesting to understand
under what conditions the asymptotic features of the continuous-time m-semigroup can be
properly approximated by the discrete ones.

Given two nonempty sets $\B,\C\in 2^X$, we write
$$
\B-\C=\{b-c:\, b\in\B,\, c\in\C\} \qquad \text{and} \qquad \|\B\|=\sup_{b\in\B} \|b\|.
$$
The following theorem is a generalization to m-semigroups of a result proven in \cite{W1}.
\begin{theorem}\label{teo:approx}
Let $S(t)$ be a closed m-semigroup, possessing the global attractor $\A$, and
let $\{S_k,\, 0<k\leq \kappa_0\}_{n\in\N}$ be a family of discrete closed m-semigroups,
with global attractor $\A_k$. Assume the following:
\begin{enumerate} [label=\rm{(H\arabic*)}]
	\item   there exists $\kappa_1\in (0,\kappa_0]$ such that the set
	\begin{equation}
	\K=\bigcup_{k\in (0,\kappa_1]}\A_k
	\end{equation}
	is bounded in $X$;  \label{H1}
	\item there exists $t_0\geq 0$ such that for any $T^\star>t_0$,
	\begin{equation}
	\lim_{k\to 0}\sup_{x\in\A_k,\,nk\in [t_0,T^\star]}\| S_k^nx-S(nk)x\|=0.
	\end{equation}
	\label{H2}
\end{enumerate}
Then,
\begin{equation}
\lim_{k\to 0}\dist (\A_k,\A)=0.
\end{equation}
\end{theorem}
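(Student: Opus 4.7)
The natural approach is by contradiction. Suppose the conclusion fails: there exist $\eps_0 > 0$, a sequence $k_n \to 0$, and points $x_n \in \A_{k_n}$ with $\dist(x_n,\A) \geq \eps_0$. The strategy is to use the invariance of $\A_{k_n}$ under $S_{k_n}$ to ``unwind'' $x_n$ as the image of some point $y_n \in \A_{k_n}$ under a controlled number of iterations, compare that discrete orbit with the continuous one via \ref{H2}, and finally exploit the fact that $\A$ attracts bounded sets (in particular $\K$ from \ref{H1}).

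By \ref{H1}, $\K$ is bounded, so since $\A$ attracts every bounded set there exists $T_0 > 0$ with
\[
\dist(S(t)\K,\A) < \frac{\eps_0}{4} \quad \text{for all } t \geq T_0.
\]
Fix $T > \max\{T_0, t_0\}$ and set $T^\star = T + \kappa_0$. Define $m_n = \lceil T/k_n \rceil$, so that $m_n k_n \in [T, T + k_n] \subset [t_0, T^\star]$ for all sufficiently large $n$. Iterating the invariance relation $S_{k_n}\A_{k_n} = \A_{k_n}$ gives $\A_{k_n} = S_{k_n}^{m_n}\A_{k_n}$, so there exists $y_n \in \A_{k_n}$ with $x_n \in S_{k_n}^{m_n} y_n$. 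Applying \ref{H2} with this $T^\star$, and using that $y_n \in \A_{k_n}$, I would obtain
\[
\bigl\| S_{k_n}^{m_n} y_n - S(m_n k_n) y_n \bigr\| \longrightarrow 0 \quad \text{as } n \to \infty.
\]

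Choosing any $c_n \in S(m_n k_n) y_n$ (nonempty since $S(t)$ is defined on all of $X$), the identity $\|\B - \C\| = \sup_{b\in\B,\,c\in\C}\|b-c\|$ yields $\|x_n - c_n\| \to 0$. Since $c_n \in S(m_n k_n)\K$ and $m_n k_n \geq T \geq T_0$, we get $\dist(c_n,\A) < \eps_0/4$, and the triangle inequality produces $\dist(x_n,\A) < \eps_0/2$ for large $n$, contradicting our choice of $x_n$. The main subtlety I anticipate is reconciling the set-valued nature of the maps with the pointwise estimate required to close the contradiction: the invariance of $\A_{k_n}$ is the crucial tool that picks out a specific orbit $S_{k_n}^{m_n} y_n$ containing $x_n$, so that the sup-type quantity in \ref{H2} actually controls $\|x_n - c_n\|$ for the single pair we care about. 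The rest is a careful matching of the time window $[t_0,T^\star]$ required by \ref{H2} with the attraction threshold $T_0$ for the bounded set $\K$.
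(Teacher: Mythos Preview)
Your proof is correct and follows essentially the same approach as the paper's: use the invariance of $\A_k$ to write each $x_k\in\A_k$ as an element of $S_k^{n_k}y_k$ for some $y_k\in\A_k$ with $n_k k$ falling in a fixed time window, invoke \ref{H2} to compare $S_k^{n_k}y_k$ with $S(n_k k)y_k$, and conclude by the attraction of $\K$ to $\A$. The only difference is cosmetic---you phrase it by contradiction along a sequence $k_n\to 0$, whereas the paper gives the equivalent direct $\eps$-$\kappa_\eps$ argument.
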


\begin{proof}
Let $\eps>0$ and $k\in(0,\kappa_1]$. Since $\K$ is bounded and $\A$ is attracting, there exists $t_\eps>t_0\geq 0$ such that
$$
\dist(S(t)\K,\A)< \frac{\eps}{2}, \qquad \forall t\geq t_\eps.
$$
Let now $x_k\in\A_k$, and pick
$$
n_k=\Big\lfloor \frac{t_\eps+1}{k} \Big\rfloor.
$$
Thanks to the invariance of $\A_k$ under $S_k$, there exists $y_k\in \A_k$ such that $x_k\in S^{n_k}_k y_k$, and
by \ref{H2},
$$
\|x_k-S(n_k k)y_k\|\leq \| S_k^{n_k}y_k-S(n_kk)y_k\|<\frac{\eps}{2}, \qquad \forall k\leq \kappa_\eps,
$$
with a proper choice of $\kappa_\eps>0$. As a consequence, if $y\in S(n_k k)y_k$,
\begin{align*}
\dist(\A_k,\A)&=\sup_{x_k\in\A_k}\inf_{x\in\A}\|x_k-x\|\leq
\sup_{x_k\in\A_k}\Big[\|x_k-y\|+\dist(y,\A)\Big]\\
&\leq   \sup_{x_k\in\A_k}\Big[\|x_k-S(n_k k)y_k\|+\dist(S(n_k k)y_k,\A)\Big]\\
&\leq \sup_{x_k\in\A_k}\|x_k-S(n_k k)y_k\|+\dist(S(n_k k)\K,\A)<\eps,
\end{align*}
and the theorem is proved.

\end{proof}

%%%%%%%%%%%%%%%%%%%%%%%%%%%%%%%%%%%%%%%%%%

\section{Applications to the Navier-Stokes Equations}\label{sec:appl}
\noindent In this section, we apply the above abstract framework to the fully implicit
Euler approximation of the Navier-Stokes equations. It is known that such a scheme
\emph{does not} generate a single-valued discrete semigroup, since uniqueness 
of solutions holds under a restriction on the time-step parameter which depends
on the initial datum.
To circumvent this difficulty, we will show that such a scheme 
generates a family of closed discrete m-semigroups depending on the time-step parameter,
whose related attractors converge to the attractor of the dynamical system generated by the Navier-Stokes equations.

\subsection{Function spaces}
Let $\Omega\subset \R^2$ be a bounded domain with smooth boundary $\partial \Omega$. For $p\in[1,\infty]$ and
$k\in\N$, we denote
by $\mathbf{L}^p(\Omega)=\{L^p(\Omega)\}^2$, $\mathbf{H}^k(\Omega)=\{H^k(\Omega)\}^2$,
and $\mathbf{H}_0^k(\Omega)=\{H_0^k(\Omega)\}^2$
the usual Lebesgue and Sobolev spaces of vector-valued functions on $\Omega$. Setting
$$
\D=\big\{u\in C^\infty_0(\Omega, \R^2): \, \dive u =0 \big\},
$$
we consider the usual Hilbert spaces associated with the Navier-Stokes equations
\begin{align*}
&H=\text{closure of } \D \text{ in } \mathbf{L}^2(\Omega),\\
&V=\text{closure of } \D \text{ in } \mathbf{H}^1(\Omega),
\end{align*}
where we denote by $|\cdot|$, $(\cdot,\cdot)$ and  $\|\cdot\|$, $((\cdot,\cdot))$ the norm
and the scalar product in $H$ and in $V$, respectively. Also, we indicate by $V^*$ the
dual space of $V$, endowed with the usual dual norm $\|\cdot\|_*$, and by $\la\cdot,\cdot \ra$ the duality pairing between $V$ and $V^*$.
Calling
$$
P:\mathbf{L}^2(\Omega)=H\oplus H^\perp\to H
$$
the Leray orthogonal projection, the Stokes operator is defined as
$$
A=-P\Delta, \qquad \dom (A)=\mathbf{H}^2(\Omega)\cap  V.
$$
It is well known that the operator $A$ is self-adjoint and strictly positive. Moreover, $\dom (A^{1/2})=V$ and
$$
\|u\|=|\nabla u|=|A^{1/2}u|, \qquad \forall u\in V.
$$
Setting
$$
B(u,v)=P\big[(u\cdot \nabla)v\big],
$$
system \eqref{eq:NS}--\eqref{eq:NSIC} can be rewritten as an abstract evolution equation of the form
\begin{equation}\label{eq:NSABS}
\begin{cases}
\dot{u} +\nu Au+B(u,u)=f,\\
u(0)=u_0,
\end{cases}
\end{equation}
where $f=Pf$, since we are assuming incompressible forcing. As proved in \cite{T2}, problem \eqref{eq:NSABS} generates a continuous and dissipative single-valued dynamical
system $S(t):H\to H$, which possesses the global attractor $\A$, bounded in $V$.

\subsection{The implicit Euler scheme}
A possible time discretization of \eqref{eq:NSABS}
is given by  the fully implicit Euler scheme
\begin{equation}\label{eq:EU}
\frac{u^n-u^{n-1}}{k}+\nu Au^n+B(u^n,u^n)=f, \qquad u^0=u_0,
\end{equation}
where $n\geq 1$ and $k>0$ is the time-step. Our goal in this section is to prove that
\eqref{eq:EU} generates a closed discrete m-semigroup $\{S_k\}_{n\in\N}$. Indeed,
as in the stationary Navier-Stokes problem, a solution to \eqref{eq:EU} is not
in general unique. In particular, uniqueness of solutions may be proved requiring
$k$ to be bounded from above by a constant depending on the 
initial datum. More specifically, such a bound vanishes as the norm of $u_0$ tends
to infinity, and this makes it impossible to properly define a single-valued semigroup acting
on the \emph{whole} phase space.   Nonetheless, by means of the tools devised in Section \ref{sec:abs},
we will still be able to give a description of the longtime behavior of
the discretized Navier-Stokes equations, and discuss its convergence to the
time-continuous asymptotic dynamics.

\subsection*{Notation}
Throughout the section, $C$ and $\Q(\cdot)$  will denote  a \emph{generic} positive constant and
a \emph{generic} increasing positive function, respectively, whose value may change even in the
same line of a certain equation. Unless otherwise
stated, these quantities will be \emph{independent} of $k,n$ and of the initial datum $u_0$. In general, they
might depend on the structural quantities of the system ($\nu, f, \Omega$).

\medskip

Fix $k>0$. For $w\in H$, we look at the problem
\begin{equation}\label{eq:aux}
u+k\nu Au +kB(u,u)=w+kf,
\end{equation}
for which we seek solutions in the following weak sense.
\begin{definition}
A vector $u\in V$ is a solution to \eqref{eq:aux} if
\begin{equation}\label{eq:weaksol}
(u,v)+k\nu ((u,v))+kb(u,u,v)=(w,v)+k(f,v), \qquad \forall v\in V,
\end{equation}
where $b(u,v,w)=\la B(u,v),w\ra$ is the usual trilinear form associated to the weak formulation
of the Navier-Stokes equations.
\end{definition}

\begin{remark}
Recall that the trilinear form $b$ satisfies the following properties:
\begin{align}
|&b(u,v,w)|\leq C|u|^{1/2}|A u|^{1/2}\|v\||w|, \qquad \forall u\in \dom(A), v\in V,w\in H,\label{eq:b1}\\
|&b(u,v,w)|\leq C|u|^{1/2} \|u\|^{1/2} \|v\| |w|^{1/2} \|w\|^{1/2}, \qquad \forall u, v,w\in V,\label{eq:b2}\\
&b(u,v,v)=0, \qquad \forall u,v\in V. \label{eq:b3}
\end{align}
\end{remark}
It is a classical result that a (possibly not unique) solution to \eqref{eq:aux} exists. Moreover,
any solution $u\in V$ satisfies the energy estimate
\begin{equation}\label{eq:en}
|u|^2+k\nu\|u\|^2\leq |w|^2+Ck|f|^2,
\end{equation}
where $C>0$ does not depend on $k$. For every $w\in H $, define the multivalued map $S_k:2^H\to 2^H$ by
$$
S_k w=\{u \in V:\, u \text{ solves \eqref{eq:aux} with time-step } k\}.
$$
Notice that, in light of \eqref{eq:en}, the set $S_k w$ is bounded in $V$ and therefore relatively compact in $H$, thanks to the
compactness of the embedding $V\hookrightarrow H$.

\subsection{The discrete m-semigroup}
Let us consider the discrete m-semigroup $\{S_k\}_{n\in\N}$ generated by $S_k$. It is now clear that,
for every $n\in \N$,
$$
S_k u^{n-1}=\{u^n\in H:\, u^n \text{ solves \eqref{eq:EU} with time-step } k\}.
$$
Also, a vector $u^n\in S_k^n u_0$ if and only if there exists a sequence of elements
$(u^0,u^1, \ldots, u^{n-1},u^n)$ such that $u^i\in S_k u^{i-1}$ for every $i=1,\ldots, n$ and $u^0=u_0$.
From \eqref{eq:en}, we infer that any $u^i\in S_k u^{i-1}$ satisfies the energy estimate
$$
|u^i|^2+k\nu\|u^i\|^2\leq |u^{i-1}|^2+Ck|f|^2,
$$
and, inductively, any $u^n\in S_k^n u^0$ fulfills the bound
\begin{equation}\label{eq:ind}
|u^n|^2+k\nu\sum_{i=1}^n \|u^i\|^2\leq |u^0|^2+Ckn|f|^2.
\end{equation}
We then have the following theorem.
\begin{theorem}
The multivalued map $S_k$ associated to the implicit Euler scheme \eqref{eq:EU} generates
a closed discrete m-semigroup $\{S_k\}_{n\in\N}$.
\end{theorem}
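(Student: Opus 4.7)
The plan is to verify the two defining axioms of a discrete m-semigroup together with closedness. Properties \ref{S.1} and \ref{S.2} are automatic once the map $S_k:2^H\to 2^H$ is set, because of the iteration convention spelled out in the remark following the discussion of \ref{S.3} in Section \ref{sec:abs}; the only substantive content left is that $S_k$ has nonempty values and is a closed set-valued map in $H$. Nonemptiness of $S_k w$ for every $w\in H$ is just the classical solvability statement for the stationary problem \eqref{eq:aux} already invoked in the excerpt, so the entire proof reduces to showing: if $w_n\to w$ in $H$ and $u_n\in S_k w_n$ satisfies $u_n\to u$ in $H$, then $u\in S_k w$.

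First I would use the energy estimate \eqref{eq:en}, applied to each $u_n$ with source $w_n$. Since $\{w_n\}$ is bounded in $H$, this gives a uniform bound on $\|u_n\|$, so that (passing to a subsequence) $u_n\rightharpoonup \tilde u$ weakly in $V$. The weak limit in $V$ must coincide with the strong limit in $H$, hence $u=\tilde u\in V$. Next, I would pass to the limit in the weak formulation
\begin{equation*}
(u_n,v)+k\nu((u_n,v))+kb(u_n,u_n,v)=(w_n,v)+k(f,v),\qquad \forall v\in V.
\end{equation*}
The linear terms are straightforward: $(u_n,v)\to(u,v)$ and $(w_n,v)\to(w,v)$ from $H$-convergence, while $((u_n,v))\to((u,v))$ from weak convergence in $V$.

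The only delicate point is the trilinear term. I would rewrite it as $b(u_n,u_n,v)=-b(u_n,v,u_n)$ using \eqref{eq:b3} and then estimate
\begin{equation*}
|b(u_n,v,u_n)-b(u,v,u)|\le |b(u_n-u,v,u_n)|+|b(u,v,u_n-u)|,
\end{equation*}
bounding both pieces via \eqref{eq:b2} by a constant (depending on the uniform $V$-bound for $u_n$ and on $u$) times $|u_n-u|^{1/2}\|u_n-u\|^{1/2}\|v\|$. Strong convergence in $H$ together with boundedness in $V$ forces this to vanish, which yields $b(u_n,u_n,v)\to b(u,u,v)$. Combining everything, $u$ satisfies \eqref{eq:weaksol} with right-hand side $w+kf$, so $u\in S_k w$ and $S_k$ is closed.

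The one step I expect to carry the weight is the passage to the limit in the nonlinear term, essentially because one has only weak convergence in $V$ for the $u_n$; the argument hinges on the compactness of the embedding $V\hookrightarrow H$, which converts weak $V$-convergence plus $H$-convergence into an estimate compatible with \eqref{eq:b2}. Everything else (nonemptiness, semigroup identities, extraction of the weak limit) is either by fiat of the definition or a direct consequence of the energy bound \eqref{eq:en}.
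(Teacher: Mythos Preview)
Your argument for the closedness of $S_k$ (the one-step map) is correct, and the passage to the limit in the trilinear term via the rewriting $b(u_n,u_n,v)=-b(u_n,v,u_n)$ together with \eqref{eq:b2} is clean. The gap is elsewhere: you assert that ``the only substantive content left is that $S_k$ \ldots\ is a closed set-valued map in $H$'', but property \ref{S.3} for a discrete m-semigroup requires that $S_k^n$ be closed for \emph{every} $n\in\N$, and closedness does not propagate automatically under composition of set-valued maps. Concretely, if $u^0_j\to u^0$ and $u^n_j\in S_k^n u^0_j$ with $u^n_j\to u^n$, each $u^n_j$ comes with a chain $(u^0_j,u^1_j,\ldots,u^n_j)$, and to apply the closedness of $S_k$ link by link you must first know that the intermediate elements $u^i_j$ converge (along a subsequence) in $H$. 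Nothing in your write-up supplies this.

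The fix is immediate with the tools you already invoked: the iterated energy bound \eqref{eq:ind} gives a uniform $V$-bound on every $u^i_j$ (since $|u^0_j|$ is bounded), so by compactness of $V\hookrightarrow H$ one extracts, for each $i$, a subsequence with $u^i_j\to u^i$ strongly in $H$ and weakly in $V$; then your limit argument in the weak formulation shows $u^i\in S_k u^{i-1}$ for each $i$, hence $u^n\in S_k^n u^0$. This is exactly what the paper does: rather than proving closedness of $S_k$ and then inducting, it works directly with the full chain and uses \eqref{eq:ind} in one stroke. Your route would reach the same conclusion once patched, but as written it stops one step short.
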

\begin{proof}
Since properties \ref{S.1}--\ref{S.2} are satisfied by definition, all we need to prove is
that $S_k^n$ is a closed multivalued map for each $n\in \N$.
As $j\to \infty$, let $u^0_j\to u^0$ in $H$ and $u^n_j\in S^n_k u^0_j$ with
$u^n_j\to u^n$ in $H$. We have to show that $u^n\in S_k^nu^0$.

Since $u^n_j\in S^n_k u^0_j$, there exists a sequence
$(u_j^0, u_j^1,\ldots, u_j^{n-1},u_j^n)$ where $u_j^i\in S_k u_j^{i-1}$ is a solution to
\begin{equation}\label{eq:weak}
(u_j^i,v)+k\nu ((u_j^i,v))+kb(u_j^i,u_j^i,v)=k(f,v)+(u_j^{i-1},v), \qquad \forall v\in V.
\end{equation}
Also, the fact that $u^0_j\to u^0$ in $H$ implies the existence of a positive number $M$ such that
$$
\sup_j |u_j^0|^2\leq M.
$$
In view of \eqref{eq:ind}, we obtain the bound
$$
|u_j^i|^2+k \nu \sum_{\ell=1}^i  \|u_j^\ell\|^2\leq Cki|f|^2 +M.
$$
Thus, for every $i=1,\ldots, n$, we have the following convergences (up to not relabeled subsequences) as
$j\to\infty$:
$$
u^i_j\to u^i, \text{ strongly in } H \text{ and weakly in } V.
$$
Now, passing to the limit in \eqref{eq:weak}, we readily get that
$$
(u^i,v)+k\nu ((u^i,v))+kb(u^i,u^i,v)=k(f,v)+(u^{i-1},v),\qquad \forall v\in V.
$$
As a consequence, $u^i\in S_k u^{i-1}$ for each $i=1,\ldots, n$.  But then, $u^n\in S_k u^{n-1}\subset S_k^n u^0$, so
$S_k^n$ is a closed map for every $n\in \N$.
\end{proof}

From the energy estimate \eqref{eq:ind} and the fact that
a closed map has necessarily closed values, we have the
following straightforward consequence.

\begin{corollary}
The discrete m-semigroup $\{S_k\}_{n\in\N}$ has compact values, namely,
the set $S_k^n u_0$ is compact in $H$ for every $u_0\in H$.
\end{corollary}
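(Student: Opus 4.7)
The plan is to combine two ingredients already on the table: closedness of the map $S_k^n$ (from the theorem just proved) and the iterated energy bound \eqref{eq:ind}. Closedness gives closed values; the energy bound gives boundedness in $V$; compactness of the embedding $V\hookrightarrow H$ then upgrades boundedness in $V$ to relative compactness in $H$. A closed, relatively compact set is compact, and we are done.

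More concretely, I would first observe that any closed m-semigroup has closed values: applying property \ref{S.3} to the constant sequence $x_j = u_0$ shows that whenever $y_j\in S_k^n u_0$ with $y_j\to y$ in $H$, the limit $y$ lies in $S_k^n u_0$. Hence $S_k^n u_0$ is closed in $H$ for every $n\in\N$ and every $u_0\in H$.

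Next, fix $u_0\in H$ and $n\in\N$, and pick any $u^n\in S_k^n u_0$. By definition there is a chain $(u^0,u^1,\ldots,u^n)$ with $u^0=u_0$ and $u^i\in S_k u^{i-1}$, and the iterated energy estimate \eqref{eq:ind} yields
$$
\|u^n\|^2 \leq \frac{1}{k\nu}\Big(|u_0|^2+Ckn|f|^2\Big).
$$
Since the right-hand side depends only on the fixed quantities $k,n,u_0,f,\nu$, the set $S_k^n u_0$ is bounded in $V$.

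Finally, the compact embedding $V\hookrightarrow H$ implies that $S_k^n u_0$ is relatively compact in $H$; combined with its closedness in $H$, we conclude that $S_k^n u_0$ is compact in $H$, as claimed. There is no genuine obstacle here: the only point worth flagging is the (trivial) extraction of the closed-values property from \ref{S.3}, which is precisely the fact the corollary's hint alludes to.
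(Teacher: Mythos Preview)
Your proof is correct and follows exactly the approach sketched in the paper: closed values from \ref{S.3} applied to a constant sequence, boundedness in $V$ from the iterated energy estimate \eqref{eq:ind}, and then compactness in $H$ via the compact embedding $V\hookrightarrow H$. The only difference is that you spell out the details, while the paper leaves the corollary as a ``straightforward consequence'' of these two ingredients.
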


\subsection{Earlier Contributions}
To continue our study, we first collect some results obtained in \cite{TW}.  The first two concern initial data $u_0\in H$.
There exists $\kappa_0>0$, independent of $u_0,n,k$, such that the following hold:

\begin{enumerate}[label=(D.\arabic*)]
	\item For every $k>0$,
	$$
	|S_k^n u_0|^2\leq (1+k)^{-n}|u_0|^2+C|f|^2, \qquad \forall n\geq 0.
	$$\label{D.1}
	\item Let $k\in(0,\kappa_0]$. There exists a constant $R_0>0$ with the following property: for every
	$R\geq 0$, there exists $t_0=t_0(R)\geq 0$ such that
	$$
	|S_k^n u_0|\leq R_0, \qquad \forall nk\geq t_0,
	$$
	whenever $|u_0|\leq R$. Both $R_0$ and $t_0$ can be explicitly computed and do not depend on $n$ and $k$.
	In other words, the set
	$$
	\B_0=\{v\in H:\, |v|\leq R_0\}
	$$
	is a bounded absorbing set for $\{S_k\}_{n\in\N}$.\label{D.2}
\end{enumerate}
We now turn our attention to initial data $u_0\in V$ and recall the main result derived in \cite{TW}, tailored
to our case.
\begin{theorem}\label{thm:TW}
Suppose $\|u_0\|\leq R$, and let the time-step $k$ be such that
\begin{equation}\label{eq:time}
	k\leq \kappa_\star(R)=\min\Big\{\kappa_0, \frac{1}{\Q(R)},C \Big\}.
\end{equation}
Then the estimate
\begin{equation}\label{D.3}
\|S_k^n u_0\|\leq \Q(R)
\end{equation}
holds true for every $n\geq 1$.
\end{theorem}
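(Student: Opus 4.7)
The plan is to produce a uniform-in-$n$ bound on $\|u^i\|$ by upgrading the weak formulation \eqref{eq:weaksol} to a $V$-norm energy estimate and then closing it by exploiting the smallness of $k$ allowed by the hypothesis.

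First, property \ref{D.1} combined with Poincar\'e's inequality yields $|u^i| \leq M(R)$ uniformly in $i$. A brief elliptic bootstrap in 2D applied to the identity $k \nu A u^i = u^{i-1} + kf - u^i - kB(u^i, u^i)$ places $u^i \in \dom(A)$, so the weak formulation may legitimately be tested with $v = 2kAu^i$ (or, more rigorously, at the level of Galerkin approximations before passing to the limit). Using the polarization identity
\begin{equation*}
2(u^i - u^{i-1}, Au^i) = \|u^i\|^2 - \|u^{i-1}\|^2 + \|u^i - u^{i-1}\|^2,
\end{equation*}
the trilinear bound \eqref{eq:b1} in the form $|b(u^i, u^i, Au^i)| \leq C |u^i|^{1/2} \|u^i\| |Au^i|^{3/2}$, and Young's inequality to absorb $|Au^i|^2$ into the dissipative term, I arrive at
\begin{equation*}
\|u^i\|^2 - \|u^{i-1}\|^2 + k\nu |A u^i|^2 \leq \frac{C k M^2}{\nu^3} \|u^i\|^4 + \frac{C k |f|^2}{\nu}.
\end{equation*}

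Closing this inequality is the main obstacle. Setting $y_i = \|u^i\|^2$, $a = CM^2/\nu^3$, and $b = C|f|^2/\nu$, it reads $y_i - ka y_i^2 \leq y_{i-1} + kb$, whose quadratic right-hand side rules out a direct linear discrete Gronwall. My plan is to choose an increasing function $\Q$ large enough that $\rho^2 := \Q(R)^2$ satisfies $\|u_0\|^2 \leq \rho^2/2$, and such that $k \leq 1/\Q(R)$ implies both $4ka\rho^2 \leq 1$ and $kb \leq \rho^2/4$. Under these choices the map $F(y) = y - kay^2$ is strictly increasing on $[0, \rho^2]$, satisfies $F(\rho^2) \geq \rho^2/2$, and an induction on $i$ proceeds as follows: if $y_{i-1} \leq \rho^2$, then $F(y_i) \leq y_{i-1} + kb \leq 3\rho^2/4 < F(\rho^2)$, so the lower root of $F(y) = y_{i-1} + kb$ lies in $[0, \rho^2)$. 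Since the increment $\|u^i - u^{i-1}\|^2$ is itself of order $k$ (again controlled by the polarization identity), no admissible $u^i \in S_k u^{i-1}$ can jump past the critical threshold $1/(2ka)$ onto the upper branch of $F$; hence $y_i \leq \rho^2$, closing the induction.

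The delicate point --- what distinguishes the discrete scheme from its continuous counterpart --- is the interplay between (i) the quadratic character of the dissipative inequality, which blocks a naive Gronwall, and (ii) the multivaluedness of $S_k$, which forces one to check that \emph{every} admissible $u^i$ obeys the bound, not merely a canonically selected one. Both difficulties are controlled simultaneously by the smallness restriction $k \leq 1/\Q(R)$ built into $\kappa_\star(R)$. An alternative route would be to bound a short initial transient $nk \leq T_0(R)$ by direct iteration and then appeal to a discrete uniform Gronwall lemma on local time windows, exploiting the local summability $\sum k\|u^i\|^2 \lesssim 1$ that follows from summing the $H$-energy estimate.
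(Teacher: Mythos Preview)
The paper does not prove Theorem~\ref{thm:TW}; it is quoted from \cite{TW} (Tone--Wirosoetisno) as a known result, so there is no in-paper proof to compare your proposal against. What follows is therefore a comment on the internal coherence of your sketch.

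Your overall strategy---test \eqref{eq:EU} with $2kAu^i$, use \eqref{eq:b1} and Young to reach a quadratic recursion in $y_i=\|u^i\|^2$, then close by induction under a smallness condition on $k$---is exactly the skeleton of the argument in \cite{TW}. The one place where your write-up is not yet airtight is the ``no-jump'' step ruling out the upper branch of $F(y)=y-kay^2$. You justify it by saying $\|u^i-u^{i-1}\|^2$ is of order $k$ ``controlled by the polarization identity'', but the polarization identity only yields $\|u^i-u^{i-1}\|^2\le y_{i-1}-y_i+kay_i^2+kb$, and this bound is useless precisely when $y_i$ is large; so the argument is circular as written. One clean way to close the gap is to retain part of the dissipative term $\nu k|Au^i|^2\ge \nu k\lambda_1 y_i$ before absorbing, obtaining $(1+\nu k\lambda_1)y_i\le y_{i-1}+kay_i^2+kb$, and then run the induction with $\rho^2$ chosen so that on $[0,2\rho^2]$ the quadratic term $kay_i^2$ is dominated by the linear gain $\nu k\lambda_1 y_i$; this forces the inductive step without appealing to continuity in $i$. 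Alternatively, the discrete uniform Gronwall route you mention at the end---combining the local-in-time summability $k\sum\|u^i\|^2\le C$ from \eqref{eq:ind} with the one-step $V$-energy inequality on moving windows---is precisely the device used in \cite{TW} and avoids the branch issue entirely.
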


\begin{remark}
Since the issue of non-uniqueness of solutions to \eqref{eq:aux} is the main motivation for this work,
let us briefly discuss one way to recover uniqueness of solution to \eqref{eq:EU}, in the sense of
\eqref{eq:weaksol} with $u=u^n$ and $w=u^{n-1}$.
Let $u^n_1$ and $u^n_2$ be two solutions corresponding
to the same initial data $u_0\in V$, let $R\geq 0$  be such that $\|u_0\|\leq R$,
and set $u^n=u^n_1-u^n_2$. If $k\leq \kappa_\star(R)$, from \eqref{eq:weaksol} and \eqref{eq:b3} we learn that
$$
|u^n|^2+k\nu\|u^n\|^2=-kb(u^n,u^n_2,u^n).
$$
Now, using \eqref{eq:b2} and Theorem \ref{thm:TW}, for any $n\geq 1$ we obtain 
\begin{align*}
|u^n|^2+k\nu\|u^n\|^2&\leq Ck |u^n|\|u^n\|\|u_2^n\|\leq \Q(R)k|u^n|\|u^n\|\\
&\leq \frac12 |u^n|^2+\Q(R)^2k^2\|u^n\|^2.
\end{align*} 
Therefore, if we require
$$
k\leq \min\Big\{\kappa_\star(R),\frac{\nu}{2\Q(R)^2}\Big\},
$$
we can conclude that
$$
|u^n|^2+k\nu\|u^n\|^2\leq 0.
$$
This estimate clearly yields uniqueness of solutions. However, two main drawbacks arise. Firstly, we necessarily need $u_0\in V$,
which rules out the possibility of defining a single-valued semigroup on the natural phase space $H$ of weak solutions.
Secondly, the restriction on $k$ depends on $\|u_0\|$, and thus uniqueness of solutions depends, in the end, on the single trajectory
chosen and not uniformly with respect to the initial datum. This is why we decided to tackle this problem exploiting the machinery
of multivalued semigroups.
\end{remark}

\subsection{The discrete global attractors}
The higher order estimate in \eqref{D.3} is not enough to conclude the existence of the global
attractor for the discrete m-semigroup $S_k$, since it requires the initial data $u_0$ to be in the
more regular space $V$. Moreover, \eqref{eq:time} shows a dependence of $\kappa_\star$ on
the initial data, which turns out to be unsatisfactory in the approximation process devised in
Theorem \ref{teo:approx}.  We now show how to overcome this difficulty. First of all, thanks to the existence of a bounded absorbing set, it is
natural to consider only initial data $u_0\in \B_0$.

\begin{lemma}\label{lem:elle}
Let $k\in (0,\kappa_0]$, and consider a sequence $(u^0,u^1, \ldots)$, where $u^0=u_0\in\B_0$ and
$u^i\in S^i_k u^0$. Then, there exists $\ell_k\in \N$ such that
\begin{equation}
\|u^{\ell_k}\|\leq R_\star,
\end{equation}
where $R_\star>0$ does not depend on $n,k$ and $u_0$.
\end{lemma}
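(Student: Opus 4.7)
The plan is to combine the telescoped energy estimate \eqref{eq:ind} with a pigeonhole argument on a $k$-adaptive time window. Since $u^0=u_0\in\B_0$ we have $|u^0|\leq R_0$, so \eqref{eq:ind} gives, for every $N\geq 1$,
\begin{equation*}
k\nu\sum_{i=1}^N\|u^i\|^2 \leq R_0^2 + CNk|f|^2.
\end{equation*}
Dividing by $k\nu N$, this yields
\begin{equation*}
\min_{1\leq i\leq N}\|u^i\|^2 \leq \frac{1}{N}\sum_{i=1}^N\|u^i\|^2 \leq \frac{R_0^2}{k\nu N} + \frac{C|f|^2}{\nu}.
\end{equation*}

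The key observation is that the right-hand side becomes independent of $k$ as soon as $kN$ is bounded away from zero by a fixed positive constant. Accordingly, the next step is to choose $N_k\in\N$ so that $kN_k$ sits in a compact interval $[1,1+\kappa_0]$ uniformly in $k\in(0,\kappa_0]$; a natural choice is $N_k=\lceil 1/k\rceil$ (reducing to $N_k=1$ when $k>1$). Defining $\ell_k\in\{1,\dots,N_k\}$ to be an index at which $\|u^i\|$ attains its minimum on $\{1,\dots,N_k\}$, the above inequality becomes
\begin{equation*}
\|u^{\ell_k}\|^2 \leq \frac{R_0^2}{k\nu N_k} + \frac{C|f|^2}{\nu} \leq \frac{R_0^2+C|f|^2}{\nu}.
\end{equation*}
Setting $R_\star^2$ equal to the right-hand side above, a quantity depending only on the structural parameters $R_0,\nu,f$, concludes the argument.

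The only delicate point is calibrating $R_\star$ to be independent of $k$: a pigeonhole over a fixed number of iterates would leave an unavoidable $k^{-1}$ factor from the energy sum and thus yield a bound blowing up as $k\to 0^+$. The remedy is precisely to let the pigeonhole window grow like $1/k$, so that the $k^{-1}$ is exactly absorbed by $N_k$. Apart from this calibration, the proof is a direct consequence of the single-step dissipation estimate \eqref{eq:en} and the definition of $\B_0$.
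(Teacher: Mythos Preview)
Your proof is correct and follows essentially the same approach as the paper's: both combine the summed energy estimate \eqref{eq:ind} over a window of length $\sim 1/k$ iterates with a pigeonhole (min $\leq$ average) argument, and then use $kN_k\geq 1$ to obtain a $k$-independent bound. The only cosmetic differences are that the paper takes $n_k=\lfloor 1/k\rfloor+1$ instead of your $N_k=\lceil 1/k\rceil$ and phrases the pigeonhole step as an argument by contradiction.
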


\begin{proof}
Let
$$
n_k=\Big\lfloor \frac{1}{k} \Big\rfloor+1.
$$
Estimate \eqref{eq:ind} immediately implies
$$
k\nu\sum_{i=1}^{n_k} \|u^i\|^2\leq R_0^2+Ckn_k|f|^2.
$$
Arguing by contradiction, we infer that there exists $\ell_k\in \{1,\ldots, n_k\}$ such that
$$
kn_k\nu \|u^{\ell_k}\|^2\leq R_0^2+Ckn_k|f|^2.
$$
Hence,
$$
\|u^{\ell_k}\|^2\leq \frac{R_0^2}{kn_k\nu}+C|f|^2.
$$
Since $kn_k\geq 1$, the proof ends by setting
$$
R^2_\star=\frac{R_0^2}{\nu}+C|f|^2.
$$
\end{proof}
Having in mind condition \eqref{eq:time}, we now fix $\kappa_1=\kappa_\star(R_\star)\leq \kappa_0$.
Combining together \eqref{D.3} and the above Lemma \ref{lem:elle} we obtain the following.
\begin{corollary}
Let $k\in (0,\kappa_1]$, and consider a sequence $(u^0,u^1, \ldots)$, where $u^0=u_0\in\B_0$ and
$u^i\in S^i_k u^0$. Then, there exists $\ell_k\in \N$ such that
\begin{equation}
\|u^{\ell_k+n}\|\leq R_1, \qquad \forall n\geq1.
\end{equation}
As a consequence, for every $k\in (0,\kappa_1]$, there exists $n_k\in \N$ such that
\begin{equation}\label{eq:R1}
\|S_k^{n_k+n}u_0\|\leq R_1, \qquad \forall n\geq1.
\end{equation}
\end{corollary}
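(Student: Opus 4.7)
The plan is straightforward once one notices that the constant $\kappa_1=\kappa_\star(R_\star)$ was chosen precisely so that Theorem \ref{thm:TW} becomes applicable to \emph{any} vector in $V$ of norm at most $R_\star$. I would therefore combine Lemma \ref{lem:elle} (which produces such a vector in the first $\lfloor 1/k\rfloor+1$ iterations of the trajectory) with Theorem \ref{thm:TW} (which then propagates a $V$-bound forward forever), gluing the two via the discrete semigroup identity $S_k^{a+b}=S_k^{a}S_k^{b}$.

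More explicitly, given the sequence $(u^0,u^1,\ldots)$ as in the statement, Lemma \ref{lem:elle} yields an index $\ell_k\in\N$ (in fact $\ell_k\leq \lfloor 1/k\rfloor+1$) with $\|u^{\ell_k}\|\leq R_\star$. For every $n\geq 0$ the m-semigroup property gives $u^{\ell_k+n}\in S_k^n u^{\ell_k}$. Since $k\in (0,\kappa_1]=(0,\kappa_\star(R_\star)]$ and the $V$-norm of $u^{\ell_k}$ does not exceed $R_\star$, Theorem \ref{thm:TW} applies with initial datum $u^{\ell_k}$ and threshold $R=R_\star$ to produce
\[
\|u^{\ell_k+n}\|\leq \Q(R_\star),\qquad \forall n\geq 1.
\]
Setting $R_1:=\max\{R_\star,\Q(R_\star)\}$ proves the first displayed inequality.

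For the ``as a consequence'' clause the only subtlety is that the index $\ell_k$ provided by Lemma \ref{lem:elle} \emph{a priori} depends on the trajectory chosen, whereas we now want a threshold after which \emph{every} element of $S_k^{m}u_0$ is controlled. This is exactly what the uniform bound $\ell_k\leq n_k:=\lfloor 1/k\rfloor +1$ built into the proof of Lemma \ref{lem:elle} is good for. Indeed, fix $n\geq 1$ and any $w\in S_k^{n_k+n}u_0$: unrolling the membership gives some sequence $(u^0,u^1,\ldots,u^{n_k+n})$ with $u^0=u_0$, $u^i\in S_k u^{i-1}$, and $u^{n_k+n}=w$. Applying Lemma \ref{lem:elle} to this sequence produces $\ell_k\in\{1,\ldots,n_k\}$ with $\|u^{\ell_k}\|\leq R_\star$, and since $n_k+n-\ell_k\geq 1$ the first part of the corollary (already established) delivers $\|w\|=\|u^{\ell_k+(n_k+n-\ell_k)}\|\leq R_1$.

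There is essentially no hard step here: the substance has already been done in Lemma \ref{lem:elle} (which absorbs any $H$-bounded initial datum into a $V$-bounded one in a controlled number of steps) and Theorem \ref{thm:TW} (which keeps $V$-bounded data $V$-bounded under the Euler iteration). The only item requiring attention is the bookkeeping in the consequence step, namely that $n_k$ must be chosen as a uniform upper bound for all possible $\ell_k$; this is automatic from the proof of Lemma \ref{lem:elle}, so no further argument is needed.
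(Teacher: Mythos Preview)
Your proposal is correct and follows essentially the same route as the paper: combine Lemma~\ref{lem:elle} with Theorem~\ref{thm:TW} (using $\kappa_1=\kappa_\star(R_\star)$), and then remove the trajectory-dependence of $\ell_k$ by exploiting the uniform upper bound $\ell_k\leq n_k=\lfloor 1/k\rfloor+1$. The only cosmetic difference is that the paper sets $R_1=\Q(R_\star)$ directly, whereas you take $R_1=\max\{R_\star,\Q(R_\star)\}$; the extra term is harmless but unnecessary since the bound is only claimed for $n\geq 1$.
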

\begin{proof}
The constant $R_1=\Q(R_\star)$ is given by \eqref{D.3}, and it is clearly independent of $n,k$
and $u_0$.
In the first estimate in the statement of the corollary, $\ell_k$ might depend on the particular sequence $(u^0,u^1,\ldots)$ which
originates from $u^0=u_0$. As in the proof of Lemma \ref{lem:elle}, the choice
$$
n_k=\Big\lfloor \frac{1}{k} \Big\rfloor+1,
$$
together with \eqref{D.3} and the fact that $\ell_k\leq n_k$, takes away this dependence and allows to obtain the uniform estimate \eqref{eq:R1}.
\end{proof}
Notice that \eqref{eq:R1} can be rewritten in the equivalent way
$$
\|S_k^{n}u_0\|\leq R_1, \qquad \forall n\geq n_k+1,
$$
and from the definition of $n_k$, we have that, in particular,
\begin{equation}\label{eq:entering}
\|S_k^{n}u_0\|\leq R_1, \qquad \forall nk\geq 1+2\kappa_1.
\end{equation}
We summarize the above observations in the next theorem, which, in fact, improves the results in \cite{TW}.
\begin{theorem}\label{prop:Vabs}
Let $\kappa_1>0$ as above and  $k\in(0,\kappa_1]$.
There exists a constant $R_1>0$ with the following property: for every
$R\geq 0$, there exists $t_1=t_1(R)\geq 0$ such that
\begin{equation}\label{eq:BBBB1}
\|S_k^n u_0\|\leq R_1, \qquad \forall nk\geq t_1,
\end{equation}
whenever $|u_0|\leq R$. Both $R_1$ and $t_1$ can be explicitly computed and do not depend on $n$ and $k$.
Hence, the set
\begin{equation}
\B_1=\{v\in V:\, \|v\|\leq R_1\}
\end{equation}
is a $V$-bounded absorbing set for $\{S_k\}_{n\in\N}$.
\end{theorem}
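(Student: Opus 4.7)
My plan is to chain together the $H$-absorbing property \ref{D.2} with the $V$-bound \eqref{eq:entering} (valid only from $u_0\in\B_0$) by means of the semigroup property \ref{S.2}. The idea is: first, wait long enough that the orbit lands in the $H$-absorbing ball $\B_0$; then, restart from any point of that orbit and wait an additional time $1+2\kappa_1$, after which the bound $R_1$ in $V$ kicks in.

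More precisely, I would fix $R\geq 0$ and $u_0\in H$ with $|u_0|\leq R$, and let $t_0=t_0(R)\geq 0$ be the time given by \ref{D.2}, so that
$$
S_k^m u_0\subset \B_0 \qquad \text{whenever } mk\geq t_0.
$$
Then I would pick the smallest integer $m_0=m_0(R,k)$ with $m_0 k\geq t_0$, noting the crude bound $m_0 k\leq t_0+k\leq t_0+\kappa_1$. By the semigroup property \ref{S.2}, for every $n\geq m_0$,
$$
S_k^n u_0=S_k^{n-m_0}\bigl(S_k^{m_0} u_0\bigr)=\bigcup_{v\in S_k^{m_0} u_0}S_k^{n-m_0}v,
$$
and each such $v$ lies in $\B_0$. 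Hence \eqref{eq:entering}, applied with starting point $v\in\B_0$, yields $\|S_k^{n-m_0}v\|\leq R_1$ provided $(n-m_0)k\geq 1+2\kappa_1$.

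Combining, the condition $nk\geq t_0(R)+1+3\kappa_1$ implies $(n-m_0)k\geq 1+2\kappa_1$ (using $m_0 k\leq t_0+\kappa_1$), so that taking the supremum over $v\in S_k^{m_0}u_0$ gives
$$
\|S_k^n u_0\|\leq R_1, \qquad \forall nk\geq t_1(R):=t_0(R)+1+3\kappa_1.
$$
This is exactly \eqref{eq:BBBB1}, and since $R_1$ is the universal constant provided by \eqref{eq:entering} (independent of $u_0,n,k$), we conclude that $\B_1=\{v\in V:\|v\|\leq R_1\}$ is a $V$-bounded absorbing set.

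I do not expect any real obstacle here: the genuine work was already done in Lemma \ref{lem:elle} and its corollary, which together promoted the $H$-absorbing property into a $V$-bound valid after time $1+2\kappa_1$ from initial data in $\B_0$. The only subtle point is to be careful with the multivalued semigroup composition — one must take the supremum over the whole set $S_k^{m_0}u_0$ rather than track a single trajectory — but this is handled cleanly by the union identity above together with the set-sup convention $\|\B\|=\sup_{b\in\B}\|b\|$ introduced earlier.
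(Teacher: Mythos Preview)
Your proposal is correct and follows essentially the same approach as the paper: first absorb into $\B_0$ via \ref{D.2}, then restart and apply \eqref{eq:entering}. You are simply more explicit about the multivalued composition and the integer bookkeeping, which leads you to the slightly larger (but equally valid) entering time $t_1=t_0+1+3\kappa_1$ instead of the paper's $t_0+1+2\kappa_1$.
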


\begin{proof}
Fix $k\in(0,\kappa_1]$ and let $|u_0|\leq R$. From \ref{D.2}, there exists $t_0=t_0(R)$ such that
$$
S_k^n u_0 \in \B_0, \qquad \forall nk\geq t_0.
$$
In view of \eqref{eq:entering}, setting $t_1=t_1(R)=t_0(R)+ 1+2\kappa_1$, we obtain
$$
S_k^n u_0 \in \B_1, \qquad \forall nk\geq t_1,
$$
concluding the proof.
\end{proof}

\begin{remark}
The result contained in the above Theorem \ref{prop:Vabs} improves the one in \cite{TW} in two directions.
On one hand, we only require the initial data to be in $H$. This shows a regularization property
analogous to the one enjoyed by the solution to the Navier-Stokes equations. On the other hand, we obtain
a uniform restriction on the time-step $k$, independent of the initial data.
\end{remark}

Thanks to the above results, the discrete m-semigroup $\{S_k\}_{n\in \N}$
satisfies the assumptions of Theorem \ref{teo:attr}.
\begin{proposition}
For every $k\in (0,\kappa_1]$, there exists the global attractor $\A_k$ of the m-semigroup $\{S_k\}_{n\in \N}$.
\end{proposition}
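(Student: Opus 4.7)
The plan is to invoke Theorem \ref{teo:attr} for the discrete closed m-semigroup $\{S_k\}_{n\in\N}$ at each fixed $k\in(0,\kappa_1]$. Closedness of $S_k^n$ has already been established, so two ingredients remain to be verified: the existence of a bounded absorbing set in $H$, and the asymptotic compactness condition
$$
\lim_{n\to\infty}\alpha(S_k^n\B_0)=0.
$$

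For the absorbing set, I would simply take $\B_0=\{v\in H:\,|v|\leq R_0\}$ supplied by property \ref{D.2}; by construction it absorbs every bounded subset of $H$ under $\{S_k\}_{n\in\N}$, uniformly in $k\in(0,\kappa_1]$.

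For asymptotic compactness, the key observation is that the $V$-bounded absorbing set $\B_1$ from Theorem \ref{prop:Vabs} does the work for us. Indeed, applying Theorem \ref{prop:Vabs} with $R=R_0$ yields $t_1=t_1(R_0)\geq 0$ such that
$$
S_k^n\B_0\subset \B_1,\qquad \forall\, nk\geq t_1.
$$
Since $\B_1$ is bounded in $V$ and the embedding $V\hookrightarrow H$ is compact, $\B_1$ is relatively compact in $H$, so by property \ref{K.3} one has $\alpha(\B_1)=0$. Monotonicity \ref{K.2} of the Kuratowski measure then gives
$$
\alpha(S_k^n\B_0)\leq \alpha(\B_1)=0,\qquad \forall\, nk\geq t_1,
$$
which is precisely \eqref{eq:asymptcpt} (in the discrete setting $\TT=\N$).

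All hypotheses of Theorem \ref{teo:attr} are therefore met, and we conclude that $\A_k=\omega(\B_0)$ is the global attractor of $\{S_k\}_{n\in\N}$. There is really no obstacle to overcome at this stage: the whole difficulty has been absorbed into the construction of the uniform $V$-bounded absorbing set $\B_1$ carried out in Lemma \ref{lem:elle} and Theorem \ref{prop:Vabs}. The only subtle point worth emphasizing is that the compactness of the Sobolev embedding $V\hookrightarrow H$ converts $V$-dissipativity directly into asymptotic compactness in $H$, bypassing any need for a separate smoothing or splitting argument.
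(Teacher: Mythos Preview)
Your proof is correct and follows precisely the route the paper intends: the paper merely states that ``thanks to the above results, the discrete m-semigroup $\{S_k\}_{n\in\N}$ satisfies the assumptions of Theorem~\ref{teo:attr}'' and records the proposition as an immediate consequence. You have simply made explicit the two verifications (bounded absorbing set $\B_0$ from \ref{D.2}, and asymptotic compactness via $S_k^n\B_0\subset\B_1$ with $\B_1$ relatively compact in $H$) that the paper leaves to the reader.
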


\begin{remark}
The global attractor $\A_k$ being the smallest closed attracting set of the phase space, from \eqref{eq:BBBB1} we
obtain the inclusion
$$
\A_k\subset \B_1.
$$
The fact that $R_1$ does not depend on $k$ then yields
\begin{equation}\label{eq:unifbdd}
\bigcup_{k\in(0,\kappa_1]}\A_k \subset \B_1.
\end{equation}
Hence, the attractors $\A_k$ enjoy a uniform regularity property.
\end{remark}

\subsection{The attractor approximation}\label{appr}
In this paragraph, we prove that the longterm behavior of the semigroup $S(t)$ generated by the
Navier-Stokes equations \eqref{eq:NSABS}
is approximated, in the sense of Theorem \ref{teo:approx}, by that of the discrete m-semigroup
related to the fully implicit Euler scheme \eqref{eq:EU}. The main result of this section reads as follows.
\begin{theorem}\label{teo:apprrrr}
The family of attractors $\{\A_k\}_{k\in(0,\kappa_1]}$  converges, as $k\to 0$, to  $\A$, namely,
\begin{equation}
\lim_{k\to 0}\dist(\A_k,\A)=0,
\end{equation}
where $\dist$ denotes the Hausdorff semidistance in $H$.
\end{theorem}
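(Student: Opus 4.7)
The plan is to apply the abstract convergence Theorem~\ref{teo:approx}, taking $S(t)$ to be the continuous semigroup generated by \eqref{eq:NSABS} and $\{S_k\}_{n\in\N}$ the discrete m-semigroup just constructed. The task therefore reduces to verifying the two hypotheses (H1) and (H2).

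Hypothesis (H1) is immediate from the uniform $V$-regularity \eqref{eq:unifbdd}: the set $\K=\bigcup_{k\in(0,\kappa_1]}\A_k$ is contained in the $V$-ball $\B_1$, and hence is bounded in $H$.

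For (H2), the key structural observation is that, by the invariance $S_k\A_k=\A_k$ combined with \eqref{eq:unifbdd}, \emph{every} discrete trajectory $(u^0,u^1,\ldots,u^n)$ issued from $u^0=x\in\A_k$ stays in $\A_k\subset\B_1$; in particular $\|u^i\|\le R_1$ for every $i\ge 0$, uniformly in $i$, $k$, and in the (possibly non-unique) choice of trajectory. Moreover, since $\|x\|\le R_1$ for every $x\in\A_k$, standard 2D Navier-Stokes theory gives bounds $u\in L^\infty(0,T^\star;V)$ and $\dot u\in L^2(0,T^\star;H)$ for the continuous orbit $u(t)=S(t)x$, with constants depending only on $R_1$, $T^\star$, $\nu$, $f$, $\Omega$. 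With these common a priori bounds, the verification of (H2) becomes a classical error analysis: setting $e^n=u^n-u(nk)$ and subtracting \eqref{eq:EU} from the continuous equation integrated over $[(n-1)k,nk]$ yields
$$
e^n-e^{n-1}+k\nu A e^n+k\bigl[B(u^n,u^n)-B(u(nk),u(nk))\bigr]=\rho^n,
$$
with a local truncation error $\rho^n$ involving $\int_{(n-1)k}^{nk}A(u(nk)-u(s))\,\dt s$ and an analogous integral for the nonlinearity. Splitting $B(u^n,u^n)-B(u(nk),u(nk))=B(e^n,u^n)+B(u(nk),e^n)$ and appealing to \eqref{eq:b2}--\eqref{eq:b3}, testing with $e^n$ and using the uniform $V$-bounds together with a discrete Gronwall argument produces
$$
\max_{nk\le T^\star}|u^n-u(nk)|\le C(T^\star)\,k^{1/2},
$$
uniformly over $x\in\A_k$ and over every trajectory choice. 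This is (H2), with $t_0=0$.

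The main obstacle is ensuring that the error estimate is genuinely uniform in the non-unique choice of the discrete trajectory $u^n\in S_k^n x$. This is precisely where the invariance of $\A_k$ under the multivalued $S_k$ is crucial: it forces every possible trajectory through $x$ to remain in the same $V$-ball $\B_1$, after which the standard backward Euler analysis applies unchanged. Once (H1) and (H2) are established, Theorem~\ref{teo:approx} delivers $\dist(\A_k,\A)\to 0$.
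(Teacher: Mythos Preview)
Your plan is correct and complete at the strategic level: (H1) via \eqref{eq:unifbdd} and (H2) via an error estimate uniform over $x\in\A_k$ are exactly what is needed, and your use of the invariance $S_k\A_k=\A_k$ to force every discrete trajectory to stay in $\B_1$ is a clean device that the paper does not exploit (the paper instead introduces a further threshold $\kappa_2=\min\{\kappa_1,\kappa_\star(R_1)\}$ and invokes Theorem~\ref{thm:TW} to get the uniform $V$-bound for all $u_0\in\B_1$). The genuine methodological difference lies in how (H2) is obtained. You propose a direct discrete comparison: set $e^n=u^n-u(nk)$, extract a truncation error $\rho^n$, test with $e^n$, and close with a discrete Gronwall inequality. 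The paper instead builds the piecewise linear interpolant $\tilde u_k$, observes that it solves a continuous perturbed Navier--Stokes equation \eqref{eq:tilde} with defect $\Psi_k$, bounds $\|\Psi_k\|_{L^2(0,T^\star;V^*)}^2\le k\,\Q(T^\star)$ via the discrete estimate \eqref{eq:diffu}, and then runs a continuous-time Gronwall argument on $v_k=u-\tilde u_k$. What the paper's route buys is that the only regularity required of the continuous solution is the uniform bound $u\in L^\infty(0,\infty;V)$ of \eqref{eq:uniS}; all the ``time-difference'' work is pushed onto the discrete trajectory through \eqref{eq:diffu}, which follows by testing the scheme with $Au^n$. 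Your route, by contrast, puts the burden on the continuous solution: bounding $\rho^n$ so that the sum is $O(k)$ typically requires something like $\dot u\in L^2(0,T^\star;V)$ or $u_{tt}\in L^2(0,T^\star;V^*)$, which in turn needs a compatibility condition at $t=0$ (essentially $u_0\in\dom(A)$), whereas $\A_k\subset\B_1$ only guarantees $u_0\in V$. This is not fatal---such estimates are indeed classical and can be recovered with extra care (or by using $t_0>0$ in (H2))---but it is a point your sketch glosses over, and it is precisely the technicality that the paper's interpolation argument sidesteps. A secondary dividend of the paper's approach is that its final lemma holds for all $u_0\in\B_1$, not just $u_0\in\A_k$.
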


Our goal is to apply Theorem \ref{teo:approx} to obtain the convergence of the discrete attractors
$\A_k$ to the continuous time attractor $\A$ of the semigroup $S(t)$.
By virtue of \eqref{eq:unifbdd}, assumption \ref{H1} is automatically
satisfied. The remaining of the section  is devoted to the verification of the uniform convergence required
by \ref{H2}.
Since \ref{H2} involves an estimate in terms of initial data belonging to $\A_k$, till the end of the section
we will assume
$$
u_0\in \B_1.
$$
Define $\kappa_2=\min\{\kappa_1,\kappa_\star(R_1)\}$, where $\kappa_\star$ is given by \eqref{eq:time}. 
Notice that since $\B_1$ is absorbing, in view of Theorem \ref{thm:TW}  we have the following uniform estimate
\begin{equation}\label{eq:unifen}
\sup_{k\in (0,\kappa_2]}\sup_{n\geq 0}\|S^n_k u_0\|\leq C.
\end{equation}
which, in turn, implies
\begin{equation}\label{eq:diffu}
\sum_{n=i}^m\|u^n-u^{n-1}\|^2\leq Ck (m-i+1)+C, \qquad \forall i=1,\ldots , m,
\end{equation}
where  $u^n\in S_k u^{n-1}$ for every $n=i,\ldots, m$. Indeed, multiplying
\eqref{eq:EU} by $2kA u^n$, we obtain
\begin{align*}
\|u^n\|^2-\|u^{n-1}\|^2+\|u^n-u^{n-1}\|^2&+2\nu k|Au^n|^2 \\
&+2kb(u^n,u^n,Au^n)=2k(f,Au^n).
\end{align*}
Estimating the trilinear form using \eqref{eq:b1} and the bound \eqref{eq:unifen}, we infer that
$$
2kb(u^n,u^n,Au^n)\leq  2Ck|u^n|^{1/2}\|u^n\||Au^n|^{3/2}\leq \frac{\nu k}{2}|Au^n|^2+Ck.
$$
Also,
$$
2k(f,Au^n)\leq 2k|f||Au^n|\leq \frac{\nu k}{2}|Au^n|^2+ Ck.
$$
Thus,
$$
\|u^n\|^2-\|u^{n-1}\|^2+\|u^n-u^{n-1}\|^2+\nu k|Au^n|^2\leq Ck.
$$
Now, summing over $n=i,\ldots, m$ and neglecting the positive term $|Au^n|^2$, we get
$$
\|u^m\|^2-\|u^{i-1}\|^2+\sum_{n=i}^m\|u^n-u^{n-1}\|^2\leq Ck(m-i+1).
$$
Hence, a further application of \eqref{eq:unifen}  entails \eqref{eq:diffu}.

For any $k>0$, we define the piecewise constant and the piecewise linear functions
$$
u_k(t)=u^n, \qquad t\in [(n-1)k,nk)
$$
and
$$
\tilde{u}_k(t)=u^n+\frac{t-nk}{k}(u^n-u^{n-1}), \qquad t\in[(n-1)k,nk).
$$
Notice that $\tilde{u}_k(nk)=u^n$. Also, it is easily seen that $\tilde{u}_k$ solves
\begin{equation}\label{eq:tilde}
\dot{\tilde{u}}_k+\nu A\tilde{u}_k+B(\tilde{u}_k,\tilde{u}_k)=f+\Psi_k,
\end{equation}
where
$$
\Psi_k(t)=\nu A(\tilde{u}_k(t)-u_k(t))+B(\tilde{u}_k(t),\tilde{u}_k(t))-B(u_k(t),u_k(t)).
$$

\begin{lemma}\label{lem:psi}
For any $T^\star>0$, $\Psi_k \in L^2(0,T^\star; V^*)$ and
\begin{equation}
\|\Psi_k\|^2_{L^2(0,T^\star; V^*)}\leq k\Q(T^\star).
\end{equation}
\end{lemma}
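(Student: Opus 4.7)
The plan is to split $\Psi_k$ into the linear piece $\nu A(\tilde u_k - u_k)$ and the nonlinear piece $B(\tilde u_k, \tilde u_k)-B(u_k,u_k)$ and estimate each separately in $L^2(0,T^\star;V^*)$. The fundamental observation is that on each subinterval $[(n-1)k,nk)$ the difference $\tilde u_k(t)-u_k(t)$ equals $\frac{t-nk}{k}(u^n-u^{n-1})$, so its $V$-norm is pointwise bounded by $\|u^n-u^{n-1}\|$, and its square integrates to $\tfrac{k}{3}\|u^n-u^{n-1}\|^2$ on that subinterval. Summing up to $n\leq N$ with $Nk\sim T^\star$ and invoking \eqref{eq:diffu} produces the overall bound $k\Q(T^\star)$.

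More precisely, first I would use that the Stokes operator is an isometry from $V$ to $V^*$, namely $\|Aw\|_*=\|w\|$ for $w\in V$. Consequently,
\begin{equation*}
\int_0^{T^\star}\|\nu A(\tilde u_k(t)-u_k(t))\|_*^2\,\dt t\leq \nu^2\sum_{n=1}^{N}\|u^n-u^{n-1}\|^2\int_{(n-1)k}^{nk}\Big(\frac{t-nk}{k}\Big)^2\dt t=\frac{\nu^2 k}{3}\sum_{n=1}^N\|u^n-u^{n-1}\|^2,
\end{equation*}
and \eqref{eq:diffu} yields the linear part of the claim. For the nonlinear part, I would use \eqref{eq:b2}, in its crude form $|b(u,v,w)|\leq C\|u\|\|v\|\|w\|$, to obtain the bilinear estimate $\|B(u,v)\|_*\leq C\|u\|\|v\|$. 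Writing the difference of the nonlinearities as
\begin{equation*}
B(\tilde u_k,\tilde u_k)-B(u_k,u_k)=B(\tilde u_k-u_k,\tilde u_k)+B(u_k,\tilde u_k-u_k),
\end{equation*}
and appealing to the uniform bound \eqref{eq:unifen} to dominate $\|\tilde u_k\|$ and $\|u_k\|$ by a constant, I get $\|B(\tilde u_k,\tilde u_k)-B(u_k,u_k)\|_*\leq C\|\tilde u_k-u_k\|$, and the same integration-and-summation argument closes this contribution.

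The only mildly delicate point is handling the endpoint subinterval when $T^\star/k$ is not an integer, which is harmless since the number of intervals is at most $\lfloor T^\star/k\rfloor+1$ and \eqref{eq:diffu} already allows for $m-i+1$ of order $T^\star/k$. There is no genuine obstacle: everything reduces to the discrete difference estimate \eqref{eq:diffu}, which was established as a consequence of the $V$-bound \eqref{eq:unifen} by testing \eqref{eq:EU} against $2kAu^n$. What makes the bound $\leq k\Q(T^\star)$ work is precisely the factor $k/3$ coming from integrating the piecewise-linear interpolation error on each subinterval against a constant, thereby trading one power of the number of subintervals against one factor of $k$.
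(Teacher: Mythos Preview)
Your proposal is correct and follows essentially the same route as the paper: bound $\|\Psi_k(t)\|_*$ pointwise by $C\|u^n-u^{n-1}\|$ on each subinterval via the decomposition of the bilinear term together with \eqref{eq:unifen}, then integrate over $[0,T^\star]$ and invoke \eqref{eq:diffu}. The only cosmetic difference is that the paper uses the cruder bound $\|\tilde u_k(t)-u_k(t)\|\leq\|u^n-u^{n-1}\|$ (yielding a factor $k$ per interval rather than your sharper $k/3$), and writes the bilinear difference as $B(\tilde u_k,\tilde u_k-u_k)+B(\tilde u_k-u_k,u_k)$ instead of your algebraically equivalent splitting.
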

\begin{proof}
Let $v\in V$ be such that $\|v\|\leq 1$, and let $t\in [(n-1)k,nk)$ be fixed. In light of \eqref{eq:b2}, we have
\begin{align*}
|\la B(\tilde{u}_k,\tilde{u}_k)-B(u_k,u_k),v\ra|&=|b(\tilde{u}_k,\tilde{u}_k-u_k,v)+b(\tilde{u}_k-u_k,u_k,v)|\\
&\leq C(\|\tilde{u}_k\|+\|u_k\|)\|\tilde{u}_k-u_k\|.
\end{align*}
Since the uniform bound \eqref{eq:unifen} implies
$$
\|u_k(t)\|=\|u^n\|\leq C
$$
and
$$
\|\tilde{u}_k(t)\|\leq\|u^n\|+\Big|\frac{t-nk}{k}\Big| (\|u^n\|+\|u^{n-1}\|)\leq C,
$$
we infer that
$$
|\la B(\tilde{u}_k,\tilde{u}_k)-B(u_k,u_k),v\ra|\leq C\|\tilde{u}_k-u_k\|\leq C\|u^n-u^{n-1}\|.
$$
Clearly,
$$
|\la A(\tilde{u}_k-u_k),v\ra|\leq \|\tilde{u}_k-u_k\|\leq C\|u^n-u^{n-1}\|.
$$
Hence, for $t\in [(n-1)k,nk)$, we can conclude that
$$
\|\Psi_k(t)\|_*\leq C\|u^n-u^{n-1}\|.
$$
Thus, setting $N^\star=\lfloor T^\star/k \rfloor$, by the above bound and \eqref{eq:diffu} we finally get
\begin{align*}
\|\Psi_k\|^2_{L^2(0,T^\star; V^*)}&= \int_0^{T^\star}\|\Psi_k(t)\|_*^2\dt t \leq \sum_{n=1}^{N^\star+1}\int_{(n-1)k}^{nk}\|\Psi_k(t)\|_*^2\dt t\\
&\leq Ck\sum_{n=1}^{N^\star +1}\|u^n-u^{n-1}\|^2\leq k \Q(T^\star).
\end{align*}
Thus, the lemma is proved.
\end{proof}

We are now ready to verify (a slightly stronger version of) assumption \ref{H2} for our discrete m-semigroup, which will
conclude the proof of Theorem \ref{teo:apprrrr}.

\begin{lemma}
For any $T^\star> 0$,
\begin{equation}
\lim_{k\to 0}\sup_{u_0\in \B_1,\,nk\in [0,T^\star]}| S_k^n u_0-S(nk)u_0|=0.
\end{equation}
\end{lemma}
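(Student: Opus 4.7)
The plan is a standard comparison between the continuous Navier-Stokes trajectory and the piecewise linear interpolant of a discrete trajectory, adapted to the multivalued setting by working with an arbitrary sequence $(u^0,u^1,\ldots)$ with $u^0=u_0\in\B_1$ and $u^i\in S_k u^{i-1}$. Since $\tilde{u}_k(0)=u_0$ and $\tilde{u}_k(nk)=u^n$, and since $\tilde{u}_k$ satisfies the perturbed equation \eqref{eq:tilde}, the claim reduces to showing that $w_k:=\tilde{u}_k-u$, where $u(t)=S(t)u_0$, obeys $\sup_{t\in[0,T^\star]}|w_k(t)|^2\leq k\,\Q(T^\star)$ with a constant uniform over $u_0\in\B_1$ and over every choice of discrete trajectory. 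Translating back via $|u^n-u(nk)|=|w_k(nk)|$ and taking the suprema over $n$ with $nk\in[0,T^\star]$ and over $u^n\in S_k^n u_0$ then yields the statement.

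First, I would subtract \eqref{eq:NSABS} from \eqref{eq:tilde} and test the resulting identity against $w_k$ in $H$. Splitting $B(\tilde{u}_k,\tilde{u}_k)-B(u,u)=B(\tilde{u}_k,w_k)+B(w_k,u)$ and invoking \eqref{eq:b3} annihilates the first piece, leaving the single trilinear contribution $b(w_k,u,w_k)$; this I would estimate through \eqref{eq:b2} as $C|w_k|\,\|w_k\|\,\|u\|$ and absorb half of the dissipation by Young's inequality. Bounding $\la \Psi_k,w_k\ra\leq\|\Psi_k\|_*\|w_k\|$ in the same fashion produces a Gronwall-type differential inequality of the form
$$
\frac{d}{dt}|w_k|^2+\nu\|w_k\|^2\leq \tfrac{C}{\nu}\|u(t)\|^2|w_k|^2+\tfrac{C}{\nu}\|\Psi_k(t)\|_*^2.
$$

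Second, I would integrate via Gronwall's lemma on $[0,T^\star]$ starting from $w_k(0)=0$. The exponential factor is harmless because the classical energy identity for the 2D Navier-Stokes equations, combined with the bound $\|u_0\|\leq R_1$ inherited from $u_0\in\B_1\subset V$, gives $\int_0^{T^\star}\|u(s)\|^2\,\dt s\leq \Q(T^\star)$ uniformly in $u_0\in\B_1$. The driving integral is exactly $\|\Psi_k\|_{L^2(0,T^\star;V^*)}^2$, which Lemma~\ref{lem:psi} controls by $k\,\Q(T^\star)$. Combining these yields the announced $O(k)$ bound, and sending $k\to 0$ concludes.

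The main technical hinge is that every constant above must be independent of the specific trajectory $(u^n)$ selected in the multivalued dynamics and of $u_0\in\B_1$. This is precisely where Theorem~\ref{prop:Vabs} and the uniform estimate \eqref{eq:unifen} enter: they guarantee that $\|u_k\|$ and $\|\tilde{u}_k\|$ remain uniformly bounded along every discrete trajectory emanating from $\B_1$, which is the input Lemma~\ref{lem:psi} needs in order to produce an $O(k)$ perturbation with a constant depending only on $T^\star$. With that structural uniformity in hand the Gronwall estimate is automatically uniform, so the outer suprema over $u_0$ and $n$ in the statement of the lemma pose no additional obstacle.
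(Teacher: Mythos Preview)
Your proposal is correct and follows essentially the same route as the paper: subtract the equations, test by the difference, use \eqref{eq:b3} to reduce to the single trilinear term $b(w_k,u,w_k)$, estimate it with \eqref{eq:b2}, apply Gronwall from $w_k(0)=0$, and feed in Lemma~\ref{lem:psi}. The only cosmetic difference is that the paper invokes the pointwise bound $\sup_{t\geq 0}\sup_{u_0\in\B_1}\|S(t)u_0\|\leq C$ to make the Gronwall coefficient constant, whereas you control the exponent via $\int_0^{T^\star}\|u(s)\|^2\,\dt s$ from the energy identity; either works.
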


\begin{proof}
Let $u=u(t)=S(t)u_0$ be the solution to \eqref{eq:NSABS}. As shown in \cite{T2}, $S(t)$ satisfies the
uniform energy estimate
\begin{equation}\label{eq:uniS}
\sup_{t\geq 0}\sup_{u_0\in \B_1} \|S(t)u_0\|\leq C.
\end{equation}
For $\tilde{u}_k$ defined as above and $k\in (0,\kappa_2]$, consider the difference $v_k=u-\tilde{u}_k$, which is a solution to
$$
\dot{v}_k +\nu Av_k+B(v_k,u)+B(\tilde{u}_k,v_k)=-\Psi_k.
$$
Testing the above equation by $v_k$, one obtains
$$
\frac12\frac{\dt}{\dt t}|v_k|^2+\nu \|v_k\|^2+b(v_k,u,v_k)=-\la\Psi_k,v_k\ra.
$$
By \eqref{eq:b2} and \eqref{eq:uniS}, the trilinear form can be estimated as
$$
|b(v_k,u,v_k)|\leq C|v_k|\|v_k\|\|u\|\leq\frac{\nu}{4}\|v_k\|^2+C|v_k|^2,
$$
and from the obvious bound
$$
-\la\Psi_k,v_k\ra\leq  \frac{\nu}{4}\|v_k\|^2+C\|\Psi_k\|_*^2,
$$
we derive the differential inequality
$$
\frac{\dt}{\dt t}|v_k|^2\leq C|v_k|^2+C\|\Psi_k\|_*^2.
$$
Since $v_k(0)=0$, an application of the Gronwall inequality together with Lemma \ref{lem:psi} gives
$$
|v_k(t)|^2\leq C\e^{C t} \int_0^t\|\Psi_k(s)\|_*^2\dt s \leq C\e^{C T^\star}\| \Psi_k\|^2_{L^2(0,T^\star;V^*)}\leq k\Q(T^\star).
$$
As a consequence,
$$
|u(t)-\tilde{u}_k(t)|^2\leq k\Q(T^\star).
$$
But then
\begin{align*}
&\lim_{k\to 0}\sup_{u_0\in \B_1,\,nk\in [0,T^\star]}| S_k^n u_0-S(nk)u_0|\\
&\qquad=\lim_{k\to 0}\sup_{u_0\in \B_1,\,nk\in [0,T^\star]}\sup_{u^n\in S_k^n u_0}| u^n-u(n k)|\\
&\qquad=\lim_{k\to 0}\sup_{u_0\in \B_1,\,nk\in [0,T^\star]}\sup_{u^n\in S_k^n u_0}| \tilde{u}_k(nk)-u(nk)|=0,
\end{align*}
and the proof is over.
\end{proof}

\noindent {\textbf{Acknowledgments.}}  This work was partially supported
by the National Science Foundation under the grant NSF-DMS-0906440,
and by the Research Fund of Indiana University.

\end{document}